\title{Pre-integration via active subspaces}
\date{February 2022}
\author{Sifan Liu}
\author{Art B. Owen}
\affil{Department of Statistics, Stanford University}
\newcommand{\wal}{{}_b\text{wal}_k}
\newcommand{\walbk}{{}_b\text{wal}_{\bsk}}
\newcommand{\Wal}[1]{{}_b\text{wal}_{#1}}
\renewcommand{\emptyset}{\varnothing}
\renewcommand{\le}{\leqslant}  
\renewcommand{\ge}{\geqslant}
\renewcommand{\leq}{\leqslant}
\renewcommand{\geq}{\geqslant}
\renewcommand{\hk}{\mathrm{HK}}
\renewcommand{\tran}{^{\mathsf{T}}}
\newcommand{\giv}{\!\mid\!}
\newcommand{\bsa}{\boldsymbol{a}}
\newcommand{\bsc}{\boldsymbol{c}}
\newcommand{\bsk}{\boldsymbol{k}}
\newcommand{\bsw}{\boldsymbol{w}}
\newcommand{\bsx}{\boldsymbol{x}}
\newcommand{\bsy}{\boldsymbol{y}}
\newcommand{\bsz}{\boldsymbol{z}}
\newcommand{\dunif}{\mathbb{U}}
\newcommand{\rd}{\,\mathrm{d}}
\newcommand{\dnorm}{\mathcal{N}}
\newcommand{\natu}{\mathbb{N}}
\newcommand{\bvhk}{\mathrm{BVHK}}
\newcommand{\bszero}{\boldsymbol{0}}
\newcommand{\bsl}{\boldsymbol{\ell}}
\newcommand{\olt}{\overline{\tau}}
\newcommand{\ult}{\underline{\tau}}
\newcommand{\wt}{\widetilde}
\newcommand{\phz}{\phantom{0}}
\DeclareMathSymbol{\widehatsym}{\mathord}{largesymbols}{"62}
\begin{document}
\maketitle

\begin{abstract}
Pre-integration is an extension of conditional Monte Carlo to quasi-Monte Carlo
and randomized quasi-Monte Carlo. It can reduce but not increase the variance
in Monte Carlo.  For quasi-Monte Carlo it can bring about improved regularity
of the integrand with potentially greatly improved accuracy.
Pre-integration is ordinarily done by integrating out one of $d$ input variables
to a function. In the common case of a Gaussian integral one can also
pre-integrate over any linear combination of variables.
We propose to do that and we choose the first eigenvector in an active subspace
decomposition to be the pre-integrated linear combination.
We find in numerical examples that this active subspace pre-integration
strategy is competitive with pre-integrating the first
variable in the principal components construction
on the Asian option where principal components are known to
be very effective.
It outperforms other pre-integration methods on some basket options where there is no
well established default. We show theoretically that, just as in Monte Carlo,
pre-integration can reduce but not increase the variance when one uses
scrambled net integration.  We show that the lead eigenvector in an
active subspace decomposition is closely related to the vector that
maximizes a less computationally tractable criterion using a Sobol' index
to find the most important linear combination of Gaussian variables.
They optimize similar expectations involving the gradient.
We show that the Sobol' index criterion for the leading eigenvector
is invariant to the way that one chooses the remaining $d-1$ eigenvectors
with which to sample the Gaussian vector.
\end{abstract}



\section{Introduction}
\label{sec: intro}

Pre-integration \citep{griewank2018high} is a strategy for high dimensional numerical integration in which one variable is integrated out in closed form
(or by a very accurate quadrature rule)
while the others are handled by quasi-Monte Carlo (QMC) sampling.
This strategy has a long history in Monte Carlo (MC)
sampling \citep{hamm:1956,trot:tuke:1956},
where it is known as conditional Monte Carlo.
In the Markov chain Monte Carlo (MCMC) literature, such
conditioning is called Rao-Blackwellization,
although it does not generally bring the optimal variance
reduction that results from the Rao-Blackwell theorem.
See \cite{robe:robe:2021} for a survey.

The advantage of pre-integration in QMC goes beyond the variance
reduction that arises in MC.  After pre-integration, a $d$-dimensional
integration problem with a discontinuity
or a kink (discontinuity in the gradient)
can be converted into a much smoother $d-1$-dimensional problem
\cite{griewank2018high}.
QMC exploits regularity of the integrand and then smoothness
brings a benefit on top of the $L^2$ norm reduction
that comes from conditioning.
Gilbert, Kuo and Sloan \cite{gilb:kuo:sloa:2021:tr}
point out that the resulting smoothness depends critically on
a monotonicity property of the integrand with respect to
the variable being integrated out.
Hoyt and Owen \cite{hoyt:owen:2020} give conditions where pre-integration
reduces the mean dimension (that we will define below)
of the integrand.
It can reduce mean dimension from proportional to $\sqrt{d}$ to $O(1)$
as $d\to\infty$ in a sequence of ridge functions with a discontinuity that
the pre-integration smooths out.
He \cite{he2019error}  studied the error rate of
pre-integration for scrambled nets applied to functions of the form
$f(\bsx)=h(\bsx)\Indc{\phi(\bsx)\geq 0}$ for a Gaussian variable $\bsx$.
That work assumes that $h$ and $\phi$ are smooth functions and $\phi$ is monotone in $x_{j}$.
Then pre-integration has a smoothing effect that when combined with
some boundary growth conditions yields an error rate of $O(n^{-1+\ep})$.

Many of the use cases for pre-integration involve integration
with respect to the multivariate Gaussian distribution,
especially for problems arising in finance.
In the Gaussian context, we have more choices for the variable
to pre-integrate over.  In addition to pre-integration over any
one of the $d$ coordinates, pre-integration over any
linear combination of the variables remains integration
with respect to a univariate Gaussian variable.
Our proposal is to pre-integrate over
a linear combination of variables chosen to optimize
a measure of variable importance derived from active subspaces
\cite{cons:2015}.

When sampling from a multivariate  Gaussian
distribution by QMC, even without pre-integration,
one must choose a square root of the covariance matrix by which to
multiply some sampled scalar Gaussian variables.
There are numerous choices for that square root.
One can sample via the principal component
matrix decomposition as \cite{acwo:broa:glas:1997} and many others do.
For integrands defined with respect to Brownian motions,
one can use the Brownian bridge
construction studied by~\cite{mosk:cafl:1996}.
These options have some potential disadvantages.
It is always possible that the integrand is little affected
by the first principal component.  In a pessimistic scenario,
the integrand could be proportional to a principal component
that is orthogonal to the first one.
This is a well known pitfall in principal components regression \cite{joll:1982}.
In a related phenomenon,
\cite{papa:2002} exhibits an integrand where QMC via
the standard construction is more effective than via the
Brownian bridge.

Not only might a principal component direction perform poorly, the first principal
component is not necessarily well defined. Although the
problem may be initially defined in terms of a specific
Gaussian distribution, by a change of
variable we can rewrite our integral
as an expectation with respect to another Gaussian distribution with a different
covariance matrix that has a different first principal component.
Or, if the problem is posed with a covariance equal to the $d$-dimensional
identity matrix, then every unit vector linear combination
of variables is a first principal component.

Some proposed methods take account
of the specific integrand while formulating a sampling strategy.
These include stratifying in a direction chosen from exponential tilting \cite{glas:hied:shah:1999},
exploiting a linearization of the integrand at $d+1$ special points
starting with the center of the domain \cite{imai:tan:2004},
and a gradient principal component analysis (GPCA) algorithm \cite{xiao:wang:2019}
that we describe in more detail below.


The problem we consider is to compute an approximation
to $\mu = \e(f(\bsx))$ where $\bsx\in\real^d$
has the spherical Gaussian distribution denoted by $\dnorm(0,I)$
and $f$ has a gradient almost everywhere, that is square integrable.
Let $C = \e( \nabla f(\bsx)\nabla f(\bsx)\tran)\in\real^{d\times d}$.
The $r$ dimensional active subspace \citep{cons:2015} is the space
spanned by the $r$ leading eigenvectors of $C$.
For other uses of the matrix $C$ in numerical computation, see
the references in \citep{constantine2014active}.
For $r=1$, let $\theta$ be the leading eigenvector of $C$ normalized to be a unit vector.
Our use is to  pre-integrate $f$ over $\theta\tran\bsx\sim\dnorm(0,1)$.

The eigendecomposition of $C$ is an uncentered principal components
decomposition of the gradients, also known as the GPCA.
The GPCA method \cite{xiao:wang:2019} also uses the eigendecomposition of $C$
to define a matrix square root for a QMC sampling strategy to reduce
effective dimension, but it involves no pre-integration.
The algorithm in \cite{xiao2018conditional} pre-integrates
the first variable $x_1$ out of $f$.  Then it applies GPCA
to the remaining $d-1$ variables in order to find a suitable
$(d-1)\times(d-1)$ matrix square root for the remaining Gaussian variables.
They pre-integrate over a coordinate variable
while we always pre-integrate
over the leading eigenvector which is not generally one
of the $d$ coordinates.  All of the algorithms that involve $C$
take a sample in order to estimate it.

This paper is organized as follows.
Section~\ref{sec:background} provides some background on RQMC and pre-integration.
Section~\ref{sec:preintvar} shows that pre-integration never increases the variance of
scrambled net integration, extending the well known property of conditional integration
to RQMC.
This holds even if the points being scrambled are not a digital net.
There is presently very little guidance in the literature about which variable to pre-integrate
over, beyond the monotonicity condition recently studied in \cite{gilb:kuo:sloa:2021:tr}
and a remark for ridge functions in \cite{hoyt:owen:2020}.
An RQMC variance formula from Dick and Pillichshammer \cite{dick:pill:2010}
overlaps greatly with a certain Sobol' index and so this
section advocates using the variable that maximizes that Sobol' index of variable importance.
That index has an effective practical estimator due to  Jansen \cite{jans:1999}.
In Section~\ref{sec: active subspace}, we describe using the unit vector $\theta$
which maximizes the Sobol' index for $\theta\tran\bsx$.  This strategy
is to find a matrix square root for which the first column maximizes
the criterion from Section~\ref{sec:preintvar}.
We show that this Sobol' index $\ult^2_\theta$ is well defined in that it does not depend
on how we parameterize the space orthogonal to $\theta$.  This index is upper bounded
by $\theta\tran\e(\nabla f(\bsx)\nabla f(\bsx)\tran)\theta$. When $f$ is differentiable,
the Sobol' index can be expressed as a weighted expectation of $\theta\tran\nabla f(\bsx)\nabla f(\bsx)\tran\theta$.
As a result, choosing a projection by active subspaces amounts to optimizing a computationally
convenient proxy measure for a Sobol' index of variable importance.
We apply active subspace pre-integration to option pricing examples in Section~\ref{sec: option pricing}.
These include an Asian call option and some of its partial derivatives (called the Greeks) as well as a basket option.
The following summary is based on the figures in that section.
Using the standard construction, the active subspace pre-integration is a clear winner for five of the six
Asian option integration problems and is essentially tied with the method from \citep{xiao2018conditional}
for the one called Gamma.  The principal components construction of
\cite{acwo:broa:glas:1997} is more commonly used for this problem
and has been extensively studied.  In that construction active subspace pre-integration is more accurate
than the other methods for Gamma, is worse than some others for Rho and is essentially the
same as the best methods in the other four problems.  Every one of the pre-integration
methods was more accurate than RQMC without pre-integration, consistent
with Theorem~\ref{thm:noharm}.
We also looked at a basket option for which there is not a strong default construction
as well accepted as  the PCA construction is for the Asian option.  There in six cases,
two basket options and three baseline sampling constructions, active subspace pre-integration was always the most accurate.
That section makes some brief comments about the computational cost.
In our simulations, the pre-integration methods cost significantly more than the
other methods, but the cost factor is not enough to overwhelm
their improved accuracy, except that pre-integrating the first
variable in the standard construction of Brownian motion
usually brought little to no advantage.
Section~\ref{sec:discussion} has our conclusions.

%

\section{Background}\label{sec:background}

In this section, we introduce some background of RQMC and pre-integration
and active subspaces.  First we introduce some notations.  Additional
notations are introduced as needed.

For a positive integer $d$, we let $1{:}d=\{1,2,\ldots,d\}$. For a subset $u\subseteq1{:}d$,
let $-u=1{:}d\setminus u$. For an integer $j\in1{:}d$, we use
$j$ to represent $\{j\}$ when the context is clear and $-j=1{:}d\setminus \{j\}$. Let $|u|$ denote the cardinality of $u$.
For
$\bsx\in\R^d$, we let $\bsx_u$ be the $|u|$ dimensional vector containing only the $x_j$
with $j\in u$. For $\bsx,\bsz\in\R^d$ and $u\subseteq1{:}d$, we let $\bsx_u{:}\bsz_{-u}$ be the $d$ dimensional
 vector, whose $j$-th entry is $x_j$ if $j\in u$, and $z_{j}$ if $j\notin u$.
We use $\natu=\{1,2,\ldots\}$ for the natural numbers
and $\natu_0=\natu\cup\{0\}$.
We denote the density and the cumulative distribution function (CDF) of standard Gaussian distribution $\dnorm(0,1)$ as $\varphi$ and $\Phi$, respectively. We let $\Phi^{-1}$ denote the inverse CDF of $\dnorm(0,1)$. We also use $\varphi$ to denote the density of the $d$-dimensional standard Gaussian distribution $\dnorm(0,I_d)$,
$\varphi(\bsx) = (2\pi)^{-d/2}\exp(-\Vert\bsx\Vert^2/2)$.
We use $\dnorm(0,I)$ when the dimension of the random variable is clear
from context.
For a matrix $\Theta\in\real^{d\times d}$ we often need to select
a subset of columns.  We do that via $\Theta[u]$, such as
$\Theta[1{:}r]$ or $\Theta[2{:}d]$.

\subsection{QMC and RQMC}

For background on QMC see the monograph \cite{dick:pill:2010} and
the survey article \cite{dick:kuo:sloa:2013}.  For a survey of RQMC
see \cite{lecu:lemi:2002}.  We will emphasize
scrambled net integration with a recent description
in \cite{sllnrqmc}.  Here we give a brief account.

QMC provides a way to estimate $\mu = \int_{[0,1]^d}f(\bsx)\rd \bsx$
with greater accuracy than can be done by MC while sharing with
MC the ability to handle larger dimensions $d$ than can be well
handled by classical quadrature methods such as those in \cite{davrab}.
The QMC estimate, like the MC one takes the form
$\hat\mu_n = (1/n)\sum_{i=0}^{n-1} f(\bsx_i)$,
except that instead of $\bsx_i\simiid\dunif[0,1]^d$ the sample
points are chosen strategically to get a small value for
$$D_n^*=D_n^*(\bsx_0,\dots,\bsx_{n-1})
=\sup_{\bsa\in[0,1]^d}\Bigl| \frac1n\sum_{i=0}^{n-1}1\{\bsx_i\in[\bszero,\bsa)\}-\prod_{j=1}^da_j\Bigr|$$
which is known as the star discrepancy. The Koksma-Hlawka inequality (see \cite{hick:2014}) is
\begin{align}\label{eq:khi}
|\hat\mu_n-\mu| \le D_n^*\times\Vert f\Vert_{\hk}
\end{align}
where $\Vert \cdot\Vert_{\hk}$ denotes total variation in the sense of Hardy and Krause.
It is possible to construct points $\bsx_i$ with
$D_n^* = O( (\log n)^{d-1}/n)$ or to choose an infinite sequence of them
along which  $D_n^*=O((\log n)^d/n)$.  Both of these are often written as
$O(n^{-1+\epsilon})$ for any $\epsilon>0$ and this rate translates directly
to $|\hat\mu_n-\mu|$ when $f\in\bvhk[0,1]^d$, the set of functions
of bounded variation in the sense of Hardy and Krause.  While the logarithmic
powers are not negligible they correspond to worst case integrands that
are not representative of integrands evaluated in practice, and as we see
below, some RQMC methods provide a measure of control against them.

The QMC methods we study are digital nets and sequences.  To define them,
for an integer base $b\ge$ let
\begin{align}\label{eq:elemint}
E(\bsk,\bsc) = \prod_{j=1}^d\Bigl[ \frac{c_j}{b^{k_j}}, \frac{c_j+1}{b^{k_j}}\Bigr)
\end{align}
for $\bsk = (k_1,\dots,k_d)$ and $\bsc = (c_1,\dots,c_d)$
where $k_j\in\natu_0$ and $0\le a_j <b^{k_j}$. The sets $E(\bsk,\bsc)$
are called elementary intervals in base $b$.
They have volume $b^{-|\bsk|}$ where  $|\bsk|=\sum_{j=1}^dk_j$.
For integers $m\ge t\ge 0$,
the points $\bsx_0,\dots,\bsx_{n-1}$ with $n=b^m$ are a $(t,m,d)$-net
in base $b$ if every elementary interval $E(\bsk,\bsc)$ with $|\bsk|\le m-t$
contains $b^{m-|\bsk|}$ of those points, which is exactly $n$ times
the volume of $E(\bsk,\bsc)$.

For each $\bsk$ the sets $E(\bsk,\bsc)$ partition $[0,1)^d$ into $b^{|\bsk|}$ congruent
half open sets. If $|\bsk|\le m-t$, then the $n$ points $\bsx_i$ are perfectly
stratified over that partition.  The power of digital nets is that the
points $\bsx_i$ satisfy ${m-t+d-1\choose d-1}$ such stratifications simultaneously.
They attain $D_n^* = O((\log n)^{d-1}/n)$ after approximating
each $[\bszero,\bsa)$ by sets $E(\bsk,\bsc)$ \cite{nied:1992}.
Given $b$ and $m$ and $d$, a smaller $t$ is the better. It is not always
possible to get $t=0$.

For an integer $t\ge0$, the infinite sequence $(\bsx_i)_{i\ge0}$ is
a $(t,s)$-sequence in base $b$ if for all  integers $m\ge t$
and $r\ge0$, the points $\bsx_{rb^m},\dots,\bsx_{rb^m+b^m-1}$
are a $(t,m,d)$-net in base $b$.
This means that the first $b^m$ points are a $(t,m,d)$-net
as are the second $b^m$ points and if we take the first $b$
such nets together we get a $(t,m+1,d)$-net.  Similarly
the first $b$ such $(t,m+1,d)$-nets form a $(t,m+2,d)$-net
and so on ad infinitum.

The nets and sequences in common use are called
digital nets and sequences owing to a digital strategy
used in their construction, where $x_{ij}$ is expanded
in base $b$ and there are rules for constructing those
base $b$ digits from the base $b$ representation of $i$.
See \cite{nied:1992}.
The most commonly used nets and sequences are those of Sobol'
\cite{sobo:1967:tran}.
Sobol' sequences are $(t,d)$-sequences in base $2$ and sometimes
using base $2$ brings computational advantages for computers
that work in base $2$.  The value
of $t$ is nondecreasing in $d$.  The first $2^m$ points of a $(t,d)$-sequence
can be a $(t',m,d)$-net for some $t'<t$.

While the Koksma-Hlawka inequality~\eqref{eq:khi} shows that
QMC is asymptotically better than MC for $f\in\bvhk[0,1]^d$ it is not
usable for error estimation, and furthermore, many integrands of
interest such as unbounded ones have $V_{\hk}(f)=\infty$.
RQMC methods can address both problems.  In RQMC the points
$\bsx_i\sim\dunif[0,1]^d$ individually while
collectively they have a small $D_n^*$. The uniformity property makes
\begin{align}\label{eq:muhatrmqc}
\hat\mu_n = \frac1n\sum_{i=0}^{n-1} f(\bsx_i)
\end{align}
an unbiased estimate of $\mu$ when $f\in L^1[0,1]^d$.
If $f\in L^2[0,1]^d$ then $\var(\hat\mu_n)<\infty$ and it can be estimated
by using independent repeated RQMC evaluations.

Scrambled $(t,m,d)$-nets
have $\bsx_i\sim\dunif[0,1]^d$ individually and the collective condition is that they
form a $(t,m,d)$-net with probability one.
For an infinite sequence of $(t,m,d)$-nets one can use
scrambled $(t,d)$-sequences. See \cite{rtms} for both of these.
The estimate $\hat\mu_n$ taken over a scrambled $(t,d)$-sequence satisfies a strong law of large numbers if $f\in L^2[0,1]^{1+\epsilon}$ \cite{sllnrqmc}.
If $f\in L^2[0,1]^d$, then $\var(\hat\mu_n) =o(1/n)$ giving the method
asymptotically unbounded efficiency versus MC which has variance $\sigma^2/n$
for $\sigma^2=\var(f(\bsx))$.
For smooth enough $f$, $\var(\hat\mu_n) = O( n^{-3}(\log n)^{d-1})$ \cite{smoovar,localanti}
with the sharpest sufficient condition in \cite{yue:mao:1999}.
Also there exists
$\Gamma<\infty$ with $\var(\hat\mu_n)\le \Gamma \sigma^2/n$
for all $f\in L^2[0,1]^d$ \cite{snxs}. This bound involves no powers of $\log(n)$.

\subsection{Pre-integration with respect to $\dunif[0,1]^d$ and $\dnorm(0,I)$}

For $j\in1{:}d$,
$\int_{[0,1]^d}f(\bsx)\rd\bsx = \int_{[0,1]^{d-1}} \int_0^1 f(\bsx)\rd x_j\rd\bsx_{-j}$
which we can also write as
$\e(f(\bsx))=\e( \e( f(\bsx)\giv \bsx_{-j}))$ for $\bsx\sim\dunif[0,1]^d$.
For $\bsx\in[0,1]^d$, define
$$
g(\bsx) = g_j(\bsx) = \int_0^1 f(\bsx) \rd x_j.
$$
It simplifies the presentation of some of our results,
especially Theorem~\ref{thm:noharm} on variance reduction,
to keep $g$ defined as above on $[0,1]^d$
even though it does not depend at all on $x_j$ and could be written
as a function of $\bsx_{-j}\in[0,1]^{d-1}$ instead.
In pre-integration
$$
\hat\mu_n =\hat\mu_{n,j} = \frac1n \sum_{i=0}^{n-1} g_j(\bsx_i)
$$
which as we noted in the introduction is conditional MC except that
we now use RQMC inputs.

Pre-integration can bring some advantages for RQMC.
The plain MC variance of $g(\bsx)$ is no larger than that of $f(\bsx)$ and is generally
smaller unless $f$ does not depend at all on  $x_j$. Thus the bound $\Gamma\sigma^2/n$
is reduced.
Next, the pre-integrated integrand $g$ can be much smoother than $f$
and (R)QMC improves on MC by exploiting smoothness.
For example
\cite{griewank2018high,xiao2018conditional} observe that for some option pricing integrands, pre-integrating
certain variable can remove the discontinuities in the integrand or its gradient.

The integrands we consider here are defined with respect to a Gaussian random variable.
We are interested in $\mu=\e( f(\bsz))$ for $\bsz\sim\dnorm(\bszero,\Sigma)$
and a nonsingular covariance $\Sigma\in\real^{d\times d}$.
Letting $R_0\in\real^{d\times d}$ with $R_0R_0\tran=\Sigma$ we can write
$\mu = \e( f(R_0\bsz))$ for $\bsz\sim\dnorm(0,I)$.
For an orthogonal matrix $Q\in\real^{d\times d}$ we also
have $Q\bsz\sim\dnorm(0,I)$.
Then taking $\bsz = \Phi^{-1}(\bsx)$ componentwise leads us to the estimate
$$
\hat\mu =\frac1n\sum_{i=0}^{n-1} f( R\Phi^{-1}(\bsx_i))\quad \text{with $R=R_0Q$}
$$
for RQMC points $\bsx_i$.
The choice of $Q$ or equivalently $R$
does not affect the MC variance of $\hat\mu$
but it can change the RQMC variance. We will consider some examples later.
The mapping $\Phi^{-1}$ from $\dunif[0,1]^d$ to $\dnorm(0,I)$ can be
replaced by another one such as the Box-Muller transformation.
The choice of transformation does not affect the MC
variance but does affect the RQMC variance. Most researchers use
$\Phi^{-1}$ but \cite{okte:gonc:2011} advocates for Box-Muller.

When we are using pre-integration for a problem defined with
respect to a $\dnorm(0,\Sigma)$ random variable we must
choose $R$ and then the coordinate $j$ over which
to pre-integrate.  Our approach is to choose $R$ to make coordinate $j=1$
as important as we can, using active subspaces.

\subsection{The ANOVA decomposition}

For $f\in L^2[0,1]^d$ we can define an analysis of variance (ANOVA) decomposition
from \cite{hoef:1948,sobo:1969,efro:stei:1981}. For details see
\cite[Appendix A.6]{mcbook}.
This decomposition writes
$$
f(\bsx) = \sum_{u\subseteq 1:d}f_u(\bsx)
$$
where $f_u$ depends on $\bsx$ only through $x_j$ with $j\in u$ and also
$\int_0^1f_u(\bsx)\rd x_j =0$ whenever $j\in u$.  The decomposition is orthogonal
in that $\int_{[0,1]^d}f_u(\bsx)f_v(\bsx)\rd\bsx=0$ if $u\ne v$.  The term $f_\emptyset$ is
the constant function everywhere equal to $\mu =\int_{[0,1]^d}f(\bsx)\rd\bsx$.
To each effect $f_u$ there corresponds a variance component
$$
\sigma^2_u = \var(f_u(\bsx)) =
\begin{cases}
\int_{[0,1]^d}f_u(\bsx)^2\rd\bsx, & |u|>0\\
0, & \text{else.}
\end{cases}
$$
For $|u|\ge2$, the effect $f_u$ is called a $|u|$-fold interaction.
The variance components sum to $\sigma^2=\var(f(\bsx))$.
We will use the ANOVA decomposition below when
describing how to choose a pre-integration variable.

Sobol' indices \cite{sobo:1993} are derived from the ANOVA decomposition.
For $u\subseteq1{:}d$ these are
$$
\ult^2_u = \sum_{v\subseteq u}\sigma^2_v\quad\text{and}\quad
\olt^2_u = \sum_{v\subseteq 1:d}\indc{u\cap v\ne\emptyset}\sigma^2_v.
$$
They provide two ways to judge the importance of the set of variables $x_j$ for $j\in u$.
They're usually normalized by $\sigma^2$ to get an interpretation as a proportion of variance explained.

The mean dimension of $f$ is $\nu(f) = \sum_{u\subseteq1:d}|u|\sigma^2_u/\sigma^2$.
It satisfies $\nu(f) = \sum_{j=1}^d\olt^2_j/\sigma^2$.
A ridge function takes the form $f(\bsx) = h(\Theta\tran\bsx)$ for $\Theta\in\real^{d\times r}$
with $\Theta\tran\Theta=I$.  For $\bsx\sim\dnorm(0,I)$, the variance of $f$ does not depend on $d$ and
the mean dimension is $O(1)$ as $d\to\infty$ \cite{hoyt:owen:2020} if $h$ is Lipschitz.
If $h$ has a step discontinuity then it is possible to have $\nu(f)=\Omega(\sqrt{d})$
reduced to $O(1)$ by pre-integration over a component variable $x_j$
with $\theta_j$ bounded away from zero as $d\to\infty$ \cite{hoyt:owen:2020}.

\section{Pre-integration and scrambled net variance}\label{sec:preintvar}

Conditional MC can reduce but not increase the variance of plain MC integration.
Here we show that the same thing holds for scrambled nets using
the nested uniform scrambling of \cite{rtms}. The affine linear scrambling
of \cite{mato:1998:2} has the same variance and hence the same result.
We assume that $f\in L^2[0,1)^d$. The half-open interval is just a notational
convenience.  For any $f\in L^2[0,1]^d$ we could set $f(\bsx)=0$
for any $\bsx\in D=[0,1]^d\setminus[0,1)^d$ and get an equivalent function with
the same integral and, almost surely, the same RQMC estimate because
all $\bsx_i\sim\dunif[0,1]^d$ avoid $D$ with probability one.

We will pre-integrate over one of the $d$ components of $\bsx\in[0,1)^d$.
It is also possible to pre-integrate over
multiple components and reduce the RQMC variance each time,
though the utility of that strategy is limited by the availability
of suitable closed forms or effective quadratures.

\subsection{Walsh function expansions}
To get variance formulas for scrambled nets we follow Dick and Pillichshammer \cite{dick:pill:2010}
who work with a Walsh function expansion of $L^2[0,1)^d$
for which they credit Pirsic \cite{pirs:1995}.
Let $\omega_b=e^{2\pi i/b}$ with $i$ being the imaginary unit.
For $k\in\natu_0$ write $k=\sum_{j\ge0}\kappa_jb^j$
for base $b$ digits $\kappa_j\in\{0,1,\dots,b-1\}$.
For $x\in[0,1)$ write $x=\sum_{k\ge1}\xi_jb^{-j}$ for base $b$
digits $\xi_j\in\{0,1,\dots,b-1\}$.  Countably many $x$ have
an expansion terminating in either infinitely many $0$s or in infinitely many $b-1$s.
For those we always choose the expansion terminating in $0$s.

Using the above notation we can define
the $k$-th $b$-adic Walsh function $\wal:[0,1)\to \bbC$ as
\[
\wal(x)=\omega_b^{\sum_{j\geq 1}\xi_j \kappa_{j-1}}.
\]
The summation in the exponent is  finite because $k<\infty$.
Note that $_b\mathrm{wal}_0(x)=1$ for all $x\in[0,1)$.
For $\bsx=(x_1,\dots,x_d)\in[0,1)^d$ and $\bsk=(k_1,\ldots,k_d)\in\natu_0^d$, the $d$-dimensional Walsh functions
are defined as
\begin{align*}
\walbk(\bsx)=\prod_{j=1}^d {}\Wal{k_j}(x_j).
\end{align*}
The Walsh series expansion of $f(x)$ is
\begin{align*}
f(\bsx)\sim
\sum_{\bsk \in \natu_0^d } \hat f(\bsk)\Wal{\bsk}(\bsx),\  \text{where }
\hat f(\bsk)=\int_{[0,1)^d} f(\bsx)\overline{\Wal{\bsk}(\bsx)} \rd\bsx.
\end{align*}
The $d$-dimensional $b$-adic Walsh function system is a complete orthonormal basis in $L_2([0,1)^d)$ \citep[Theorem A.11]{dick:pill:2010} and the series expansion converges to $f$ in $L^2$.

While our integrand is real valued, it will also satisfy an expansion written
in terms of complex numbers.
For real valued $f$,
\begin{align*}
\var(f)=\sum_{\bsk\in\bbN_0^d\setminus\{\bszero\} } |\hat f(\bsk)|^2.
\end{align*}
The variance under scrambled nets is different.  To study it we group the Walsh coefficients.
For $\bsl\in\natu_0^d$ let
$$
C_{\bsl} = \bigl\{ \bsk\in\natu_0^d\bigm| \lfloor b^{k_j-1}\rfloor \le k_j<b^{\ell_j}, 1\le j\le d\bigr\}.
$$
Then define
$$
\beta_{\bsl}(\bsx) = \sum_{\bsk\in C_{\bsl}}\hat f(\bsk)\walbk(\bsx).
$$
The functions $\beta_{\bsl}$ are orthogonal in that
$\int_{[0,1)^d}\beta_{\bsl}(\bsx)\widebar{\beta_{\bsl'}(\bsx)}\rd\bsx=0$
when $\bsl'\ne\bsl$.
For $\bsl\ne\bszero$, $\beta_{\bsl}(\bsx)$ has variance
$$
\sigma^2_{\bsl} = \int_{[0,1)^d}|\beta_{\bsl}(\bsx)|^2\rd \bsx =
\sum_{\bsk\in C_{\bsl}} |\hat f(\bsk)|^2.
$$
If $\bsx_i$ are a scrambled version of original points $\bsa_i\in[0,1)^d$ then
under this scrambling
\begin{align}\label{eq:generalgain}
\var(\hat\mu_n) = \frac1n\sum_{\bsl\in\natu_0^d\setminus\{\bszero\}} \Gamma_{\bsl}\sigma^2_{\bsl}
\end{align}
for a collection of gain coefficients $\Gamma_{\bsl}\ge0$ that depend on the $\bsa_i$.
This expression can also be obtained through a base $b$ Haar wavelet
decomposition \cite{snetvar}.
Our $\Gamma_{\bsl}$ equals $nG_{\bsl}$ from \cite{dick:pill:2010}.
The variance of $\hat\mu$  under IID MC sampling is $(1/n)\sum_{\bsl\in\natu^d_0\setminus\{\bszero\}}\sigma^2_{\bsl}$
so $\Gamma_{\bsl}<1$ corresponds to integrating the term $\beta_{\bsl}(\bsx)$
with less variance than MC does.

If scrambling of \cite{rtms} or \cite{mato:1998:2} is applied to $\bsa_i$ then
\begin{align}
\label{equ: gain coef}
\Gamma_{\bsl}&=\frac{1}{n}\sum_{i,i'=0}^{n-1}\prod_{j=1}^d \frac{b\Indc{\lfloor b^{\ell_j}a_{i,j}\rfloor = \lfloor b^{\ell_j}a_{i',j}\rfloor } - \Indc{\lfloor b^{\ell_i-1}a_{i,j}\rfloor = \lfloor b^{\ell_j-1}a_{i',j}\rfloor }}{b-1}.
\end{align}
This holds for any $\bsa_i$ not just digital nets.
When $\bsa_i$  are the first $b^m$ points of a $(t,d)$-sequence in base $b$
then $\Gamma=\sup_{\bsl}\Gamma_{\bsl}<\infty$ (uniformly in $m$) so that
$\var(\hat\mu)\le \Gamma\sigma^2/n$.
Similarly for any $\bsl\in\natu_0^d$ we have $\Gamma_{\bsl}\to0$
as $n=b^m\to\infty$ in a $(t,d)$-sequence in base $b$ from which $\var(\hat\mu_n)=o(1/n)$.
For a $(t,m,s)$-net in base $b$, one can show that the gain coefficients $\Gamma_{\bsl}=0$ for all $\bsl$
with $|\bsl|\leq m-t$.

\subsection{Walsh decomposition after pre-integration}

\begin{proposition}
For $f\in L^2[0,1)^d$ and $j\in1{:}d$, let $g$ be $f$ pre-integrated over $x_j$.
Then for $\bsk\in\natu_0^d$,
\begin{align}\label{eq:0ornot}
\hat g(\bsk)=
\begin{cases}
\hat f(\bsk), & k_j=0\\
0,&k_j>0.
\end{cases}
\end{align}
\end{proposition}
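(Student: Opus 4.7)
The proof plan is to compute $\hat{g}(\bsk)$ directly from its defining integral, split the integral via Fubini, and use two elementary facts: $g$ does not depend on $x_j$, and the univariate Walsh functions $\Wal{k}$ satisfy $\int_0^1 \Wal{k}(x)\,dx = 0$ for $k\geq 1$ (and equals $1$ for $k=0$).

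First I would write
\[
\hat g(\bsk)=\int_{[0,1)^d} g(\bsx)\prod_{l=1}^d\overline{\Wal{k_l}(x_l)}\,d\bsx
\]
and then split off the integral over $x_j$. Since $g(\bsx)$ depends only on $\bsx_{-j}$, the $x_j$-integral factors out as $\int_0^1 \overline{\Wal{k_j}(x_j)}\,dx_j$. For $k_j>0$ this scalar Walsh integral vanishes (it is just orthogonality of $\Wal{k_j}$ against $\Wal{0}\equiv 1$, which I would note follows because at least one base-$b$ digit of $k_j$ is nonzero, making the integrand a product that sums to zero over the dyadic refinement), which immediately gives $\hat g(\bsk)=0$.

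For the case $k_j=0$, the factor $\overline{\Wal{0}(x_j)}=1$, so inserting it back inside the integral and substituting $g(\bsx_{-j})=\int_0^1 f(\bsx)\,dx_j$ turns the expression into
\[
\hat g(\bsk)=\int_{[0,1)^{d-1}}\!\!\left(\int_0^1 f(\bsx)\,dx_j\right)\prod_{l\ne j}\overline{\Wal{k_l}(x_l)}\,d\bsx_{-j}=\int_{[0,1)^d}f(\bsx)\prod_{l=1}^d\overline{\Wal{k_l}(x_l)}\,d\bsx=\hat f(\bsk),
\]
where in the middle step I used $\overline{\Wal{0}(x_j)}=1$ to pull the $x_j$-integral back inside. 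Fubini is justified by $f\in L^2\subset L^1$ and the boundedness $|\Wal{k_l}|=1$.

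There is no real obstacle here: the proof is essentially bookkeeping once one recognizes that the Walsh basis factorizes across coordinates and that the $k=0$ Walsh function is the constant $1$. The only point worth flagging explicitly is the vanishing of $\int_0^1 \Wal{k}(x)\,dx$ for $k\geq 1$, which I would either cite from \cite{dick:pill:2010} or verify in a single line using $\sum_{\kappa=0}^{b-1}\omega_b^\kappa=0$.
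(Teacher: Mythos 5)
Your proof is correct and takes essentially the same route as the paper's: factor out the $x_j$-integral using that $g$ does not depend on $x_j$, observe that $\int_0^1 \overline{\Wal{k_j}(x_j)}\rd x_j$ vanishes for $k_j>0$ and equals $1$ for $k_j=0$, and in the latter case substitute $g(\bsx)=\int_0^1 f(\bsx)\rd x_j$ to recover $\hat f(\bsk)$. The only differences are cosmetic: you spell out the Fubini justification and the one-line verification that the univariate Walsh integral vanishes (via $\sum_{\kappa=0}^{b-1}\omega_b^{\kappa}=0$), both of which the paper leaves implicit.
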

\begin{proof}
We write
$$
\hat g(\bsk)
=\int_{[0,1)^{d-1}} \int_0^1g(\bsx)\overline{\walbk(\bsx)}\rd x_j\rd\bsx_{-j}
=\int_{[0,1)^{d-1}} g(\bsx)\int_0^1\overline{\walbk(\bsx)}\rd x_j\rd\bsx_{-j}
$$
because $g(\bsx)$ does not depend on $x_j$.
If $k_j>0$, then the inner integral vanishes establishing the second clause in~\eqref{eq:0ornot}.
If $k_j=0$ then ${}_b\mathrm{wal}_{k_j}(x_j)=1$ for all $x_j$
and the inner integral equals $\prod_{\ell\ne j}{}_b\mathrm{wal}_{k_j}(x_j)=\walbk(\bsx)$
establishing the second clause.
\end{proof}

\begin{theorem}\label{thm:noharm}
For $\bsa_0,\dots,\bsa_{n-1}\in[0,1)^d$ let $\bsx_0,\dots,\bsx_{n-1}$ be a scrambled version
of them using the algorithm from \cite{rtms} or \cite{mato:1998:2}. Let $f\in L^2[0,1)^d$
and for $j\in 1{:}d$, let $g$ be $f$ pre-integrated over $x_j$. Then
$$
\var\Bigl(\frac1n\sum_{i=0}^{n-1}g(\bsx_i)\Bigr)
\le \var\Bigl(\frac1n\sum_{i=0}^{n-1}f(\bsx_i)\Bigr).
$$
\end{theorem}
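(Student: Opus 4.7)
\textbf{Proof plan for Theorem~\ref{thm:noharm}.}

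The plan is to apply the scrambled-net variance formula~\eqref{eq:generalgain} to both $f$ and $g$ using the same underlying points $\bsa_i$ (and hence the same scramble), then compare term by term. The key observation is that the gain coefficients $\Gamma_{\bsl}$ in~\eqref{equ: gain coef} depend only on the points $\bsa_i$ and on $\bsl$, not on the integrand. Hence they are identical in the two variance expressions, and only the Walsh variance components $\sigma^2_{\bsl}$ differ.

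First I would write
$$
\var\Bigl(\tfrac1n\sum_{i=0}^{n-1}f(\bsx_i)\Bigr) = \frac1n\sum_{\bsl\ne\bszero}\Gamma_{\bsl}\,\sigma^2_{\bsl}(f),
\qquad
\var\Bigl(\tfrac1n\sum_{i=0}^{n-1}g(\bsx_i)\Bigr) = \frac1n\sum_{\bsl\ne\bszero}\Gamma_{\bsl}\,\sigma^2_{\bsl}(g),
$$
where $\sigma^2_{\bsl}(\cdot)=\sum_{\bsk\in C_{\bsl}}|\widehat{(\cdot)}(\bsk)|^2$. Then I would invoke the preceding proposition, which says $\hat g(\bsk)=\hat f(\bsk)$ when $k_j=0$ and $\hat g(\bsk)=0$ when $k_j>0$. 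This immediately gives the termwise bound
$$
\sigma^2_{\bsl}(g) = \sum_{\bsk\in C_{\bsl},\ k_j=0}|\hat f(\bsk)|^2 \le \sum_{\bsk\in C_{\bsl}}|\hat f(\bsk)|^2 = \sigma^2_{\bsl}(f).
$$
(In fact, for $\ell_j\ge 1$ every $\bsk\in C_{\bsl}$ has $k_j\ge 1$, so $\sigma^2_{\bsl}(g)=0$; for $\ell_j=0$ every $\bsk\in C_{\bsl}$ has $k_j=0$ and the two are equal. Either way the inequality holds.)

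Finally, since $\Gamma_{\bsl}\ge 0$ for all $\bsl$, multiplying the termwise inequality by $\Gamma_{\bsl}/n$ and summing over $\bsl\ne\bszero$ yields $\var(\hat g_n)\le\var(\hat f_n)$, as required. There is no real obstacle: the only thing that needs care is making explicit that the same scramble is applied to the same base points in both variance formulas, so that the gain coefficients cancel out of the comparison, and that the proposition on $\hat g(\bsk)$ forces the restricted sum to drop exactly the $k_j>0$ contributions, which are nonnegative.
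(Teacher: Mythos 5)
Your proposal is correct and follows essentially the same route as the paper: identical gain coefficients $\Gamma_{\bsl}$ for both integrands, the termwise bound $\sigma^2_{\bsl}(g)\le\sigma^2_{\bsl}(f)$ from the proposition on the Walsh coefficients of $g$, and then summation via~\eqref{eq:generalgain}. Your added parenthetical (that $\sigma^2_{\bsl}(g)$ is exactly $0$ when $\ell_j\ge1$ and exactly $\sigma^2_{\bsl}(f)$ when $\ell_j=0$) is a correct refinement of the intended definition of $C_{\bsl}$ that the paper's proof does not spell out, but it changes nothing essential.
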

\begin{proof}
With either $f$ or $g$ we have the same gain coefficients $\Gamma_{\bsl}$ for $\bsl\in\natu_0^d$.
However
\begin{align*}
\sigma^2_{\bsl}(g)
= \sum_{\bsk\in C_{\bsl}}|\hat g(\bsk)|^2
= \sum_{\bsk\in C_{\bsl},k_j=0}|\hat f(\bsk)|^2
\le \sum_{\bsk\in C_{\bsl}}|\hat f(\bsk)|^2=\sigma^2_{\bsl}(f).
\end{align*}
The result now follows from~\eqref{eq:generalgain}.
\end{proof}

Theorem~\ref{thm:noharm} shows that pre-integration does not increase the
variance under scrambling.  This holds whether or not the underlying points
are a digital net, though of course the main case of interest is for scrambling
of digital nets and sequences.

Pre-integration has another benefit that is not captured by Theorem~\ref{thm:noharm}.
By reducing the input dimension from $d$ to $d-1$ we might be able to find
better points in $[0,1]^{d-1}$ than the $(t,m,d)$-net we would otherwise use in $[0,1]^d$.
Those improved points might have some smaller gain coefficients or
they might be a $(t',m,s-1)$ net in base $b$ with $t'<t$.

For scrambled net sampling, reducing the dimension reduces an upper bound on the variance.
For any function $f\in L^2[0,1)^d$, the variance using a scrambled $(t,m,d)$-net in
base $b$ is at most $b^{t+d}$ times the MC variance.
Reducing the dimension reduces the bound to $b^{t+d-1}$ times the MC variance.
For digital constructions in base $2$, there are sharper bounds on these ratios,
$2^{t+d-1}$ and $2^{t+d-2}$, respectively \cite{pan:owen:2021:tr} and for some
nets described there, even lower bounds apply.

As remarked above, pre-integration over a variable $x_j$ that $f(\bsx)$ uses
will reduce the variance under scrambled net sampling. This
reduction does not require $f$ to be monotone in $x_j$, though such
cases have the potential to bring a greater improvement.
Pre-integration can either increase or decrease the mean dimension because
$$
\nu(f) =
\frac{\sum_{\bsk\in\natu_0^d\setminus\{\bszero\}}|\hat f(\bsk)|^2\times\Vert\bsk\Vert_0}
{\sum_{\bsk\in\natu_0^d\setminus\{\bszero\}}|\hat f(\bsk)|^2}
\quad\text{and}\quad
\nu(g_j) =
\frac{\sum_{\bsk\in\natu_0^d\setminus\{\bszero\}}\indc{k_j>0}|\hat f(\bsk)|^2\times\Vert\bsk\Vert_0}
{\sum_{\bsk\in\natu_0^d\setminus\{\bszero\}}\indc{k_j>0}|\hat f(\bsk)|^2}
$$
and pre-integration could possibly reduce the denominator by a greater proportion than
it reduces the numerator.

\subsection{Choice of $x_j$}
In order to choose $x_j$ to pre-integrate over, we can look at the variance
reduction we get.  Pre-integrating over $x_j$ reduces the scrambled net variance by
\begin{align}\label{eq:reducedj}
\frac1n\sum_{\bsl\in\natu_0^d\setminus\{\bszero\}}\Gamma_{\bsl}\sigma^2_{\bsl}\indc{\ell_j>0}
=\frac1n\sum_{\bsk\in\natu_0^d\setminus\{\bszero\}}\Gamma_{\bsl}|\hat f(\bsk)|^2\indc{k_j>0}.
\end{align}
Evaluating this quantity for each $j\in 1{:}d$  might be more expensive than
getting a good estimate of $\mu$. However we don't
need to find the best $j$.
Any $j$ where $f$ depends on $x_j$ will bring some improvement.
Below we develop a principled and computationally convenient choice
by choosing the $j$ which is most important as measured by
a Sobol' index \cite{sobo:1993} from global sensitivity analysis \cite{raza:etal:2021}.



A convenient proxy replacement for~\eqref{eq:reducedj} is
\begin{align}\label{eq:sobolupper}
\frac1n\sum_{\bsk\in\natu_0^d\setminus\{\bszero\}}
|\hat f(\bsk)|^2\indc{k_j>0}
=\frac1n\sum_{u\subseteq1:d}\sigma^2_u\indc{j\in u}
\end{align}
where $\sigma^2_u$ is the ANOVA variance component for the set $u$.
The equality above follows because the ANOVA can be defined,
as Sobol' \cite{sobo:1969} did, by collecting up the terms involving $x_j$ for $j\in u$
from the orthogonal decomposition.  Sobol' used Haar functions.
The right hand side of~\eqref{eq:sobolupper} equals $\olt^2_j/n$.
It counts all the variance components in which variable $j$ participates.
From the orthogonality properties of ANOVA effects it follows that
$$
\olt^2_j = \frac12\int_{[0,1]^{d+1}} \bigl(f(\bsz_j{:}\bsx_{-j})-f(\bsx)\bigr)^2\rd\bsz_j\rd\bsx.
$$
The Jansen estimator \cite{jans:1999} is an estimate of the above integral that can
be done by a $d+1$ dimensional MC or QMC or RQMC sampling algorithm.
Our main interest in this Sobol' index estimator is that we use it as a point
of comparison to the use of active subspaces in choosing a projection of
a Gaussian vector along which to pre-integrate.

\section{Active subspace method}\label{sec: active subspace}

Without loss of generality an expectation defined with respect to $\bsx\sim\dnorm(0,\Sigma)$ for
nonsingular $\Sigma\in\real^{d\times d}$ can be written as an expectation with respect
to $\bsx\sim\dnorm(0,I)$.
For a unit vector $\theta\in\real^d$, we will pre-integrate over $\bsx\tran\theta\sim\dnorm(0,1)$ and
then the problem is to make a principled choice of $\theta$.
It would not be practical to seek an optimal choice.

Our proposal is to use active subspaces \cite{cons:2015}.  As mentioned in the introduction we let
$$C = \e( \nabla f(\bsx)\nabla f(\bsx)\tran)$$
and then let $\Theta[1{:}r]$ comprise  the $r$ leading eigenvectors of $C$.
The original use for active subspaces is to approximate $f(\bsx)\approx
\tilde f(\Theta[1{:}r]\tran\bsx)$ for some function $\tilde f$ on $\real^r$.
It is well known that one can construct functions where the active subspace will
be a bad choice over which to approximate.
For instance, with $f(\bsx) = \sin(10^6x_1)+100x_2$ the $r=1$ active
subspace provides a function of $x_1$ alone while a function of $x_2$ alone
can provide a better approximation than a function of $x_1$ alone can.
Active subspaces remain useful for approximation because the motivating problems
are not so pathological and there is a human in the loop to catch such things.
They also have an enormous practical advantage that one
set of evaluations of $\nabla f$
can be used in the search for $\Theta$ instead of having every candidate
$\Theta$ require its own evaluations of $\nabla f$.
Using active subspaces for integration retains that advantage.

In our setting, we take $r=1$ and pre-integrate over $\theta\tran\bsx$ where
$\theta$ is the leading eigenvector of $C$.  That is $\theta$ maximizes
$\theta\tran \e(\nabla f(\bsx)\nabla f(\bsx)\tran)\theta$ over $d$ dimensional unit vectors.
Now suppose that instead of using $f(\bsx)$ we use $f_Q(\bsx)=f(Q\bsx)$
for an orthogonal matrix $Q\in\real^{d\times d}$.
Then $\e( \nabla  f_Q(\bsx) \nabla  f_Q(\bsx)\tran ) = Q\tran CQ$
which is similar to $C$.  It has the same eigenvalues and the leading eigenvector
is $\tilde\theta =Q\tran\theta$.
Furthermore, Theorem 3.1 of \cite{zhan:wang:he:2021} shows that
the invariance extends to the whole eigendecomposition.

\subsection{Connection to a Sobol' index}
The discussion in Section~\ref{sec:preintvar} motivates
pre-integration of $f(\bsx)$ for $\bsx\sim\dnorm(0,I)$
over a linear combination $\theta\tran\bsx$
having the largest Sobol' index over unit vectors $\theta$.
For $\theta_1=\theta$,
let $\Theta=(\theta_1,\theta_2,\dots,\theta_d)\in\real^{d\times d}$ be an orthogonal matrix
and write
$$f_\Theta(\bsx)=f(\Theta\bsx)=f(x_1\theta_1+x_2\theta_2+\cdots+x_d\theta_d).$$
Then we define $\olt^2_\theta(f)$ to be $\olt^2_1$ in the ANOVA
of $f_\Theta(\bsx)$.
First we show that $\olt^2_\theta$ does not depend on
the last $d-1$ columns of $\Theta$.

Let $z$, $\tilde z$ and $\bsy$ be independent
with distributions $\dnorm(0,1)$, $\dnorm(0,1)$ and $\dnorm(0,I_{d-1})$
respectively.
Let $\bsx = z\theta_1+\Theta_{-1}\bsy$ and
$\tilde \bsx = \tilde z\theta_1 + \Theta_{-1}\bsy$.
Using the Jansen formula for $\theta=\theta_1$,
\begin{align}\label{eq:beforechangevariable}
\olt^2_\theta = \frac12\int_\real\int_\real\int_{\real^{d-1}}
\bigl( f(z\theta_1 +\Theta_{-1}\bsy)-f(\tilde z\theta_1+\Theta_{-1}\bsy)\bigr)^2
\varphi(\bsy)\rd\bsy\varphi(\tilde z)\rd \tilde z\varphi(z)\rd z.
\end{align}
Now for an orthogonal matrix $Q\in\real^{(d-1)\times(d-1)}$, let
$$\wt \Theta = \Theta\begin{pmatrix}
1 & \bszero_{d-1}\tran\\
\bszero_{d-1} & Q
\end{pmatrix}
=\begin{pmatrix}
\tilde\theta_1&\tilde\theta_2 &\cdots& \tilde \theta_d
\end{pmatrix}
$$
where $\tilde \theta_1=\theta_1$.
In this parameterization we get
\begin{align*}
\olt^2_\theta
&= \frac12\int_\real\int_\real\int_{\real^{d-1}}
\bigl( f(z\theta_1 +\wt\Theta_{-1}\bsy)-f(\tilde z\theta_1+\wt\Theta_{-1}\bsy)\bigr)^2
\varphi(\bsy)\rd\bsy\varphi(\tilde z)\rd \tilde z\varphi(z)\rd z\\
&= \frac12\int_\real\int_\real\int_{\real^{d-1}}
\bigl( f(z\theta_1 +\Theta_{-1}Q\bsy)-f(\tilde z\theta_1+\Theta_{-1}Q\bsy)\bigr)^2
\varphi(\bsy)\rd\bsy\varphi(\tilde z)\rd \tilde z\varphi(z)\rd z
\end{align*}
which matches~\eqref{eq:beforechangevariable}
after a change of variable.
There is an even stronger invariance property in this setup.
The random variable $\e( f_\Theta(\bsx)\giv \bsx_{-1})$
(random because it depends on $\bsx_{-1}$) has a distribution
that does not depend on $\theta_2,\dots,\theta_d$.

\begin{theorem}\label{thm:uniquesobol}
Let $\bsx\sim\dnorm(0,I_d)$ for $f$ with $\e(f(\bsx)^2)<\infty$
and let $\Theta\in\real^{d\times d}$ be an orthogonal matrix
with columns $\theta_j$ for $j=1,\dots,d$.
Then the distribution of $\e( f_\Theta(\bsx)\giv\bsx_{-1})$
does not depend on the last $d-1$ columns of~$\Theta$.
\end{theorem}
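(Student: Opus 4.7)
The plan is to write the conditional expectation explicitly and reduce the claim to the easy observation that the distribution of $\Theta_{-1}\bsx_{-1}$ depends only on $\theta_1$.

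First I would expand the conditional expectation. Since $\bsx\sim\dnorm(0,I_d)$ has independent coordinates and $f_\Theta(\bsx)=f(x_1\theta_1+\Theta_{-1}\bsx_{-1})$, integrating out $x_1$ against its $\dnorm(0,1)$ density gives
\begin{align*}
\e(f_\Theta(\bsx)\giv \bsx_{-1}) \;=\; \int_\real f(z\theta_1+\Theta_{-1}\bsx_{-1})\,\varphi(z)\rd z \;=\; H(\Theta_{-1}\bsx_{-1}),
\end{align*}
where I define the auxiliary function $H(\bsw)=\int_\real f(z\theta_1+\bsw)\varphi(z)\rd z$ on $\real^d$. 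The crucial point is that $H$ depends on $\Theta$ only through $\theta_1$; the columns $\theta_2,\dots,\theta_d$ enter the right-hand side solely through the random vector $\bsw=\Theta_{-1}\bsx_{-1}$.

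Next I would identify the distribution of $\bsw$. Since $\bsx_{-1}\sim\dnorm(0,I_{d-1})$ and $\Theta_{-1}\in\real^{d\times(d-1)}$ is a linear map, $\bsw$ is Gaussian with mean zero and covariance $\Theta_{-1}\Theta_{-1}\tran$. Because $\Theta$ is orthogonal, $\Theta\Theta\tran=I_d=\theta_1\theta_1\tran+\Theta_{-1}\Theta_{-1}\tran$, so $\Theta_{-1}\Theta_{-1}\tran=I_d-\theta_1\theta_1\tran$, the orthogonal projector onto $\theta_1^\perp$. This covariance is determined by $\theta_1$ alone and has no dependence on the particular orthonormal basis $\theta_2,\dots,\theta_d$ chosen for $\theta_1^\perp$.

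Combining the two steps, $\e(f_\Theta(\bsx)\giv\bsx_{-1})=H(\bsw)$ is a fixed (measurable) function $H$ (depending only on $f$ and $\theta_1$) applied to a random vector $\bsw\sim\dnorm(0,I_d-\theta_1\theta_1\tran)$ whose distribution depends only on $\theta_1$. Therefore the induced law of the random variable $H(\bsw)$ depends only on $\theta_1$, proving the theorem. I do not anticipate a real obstacle: the main content is simply recognizing that $\Theta_{-1}\Theta_{-1}\tran=I_d-\theta_1\theta_1\tran$, so that any rotation $Q$ applied to the last $d-1$ columns of $\Theta$ leaves the joint distribution of $(\theta_1,\Theta_{-1}\bsx_{-1})$ unchanged. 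The only minor care needed is to ensure $H$ is well defined almost everywhere, which follows from $\e(f(\bsx)^2)<\infty$ and Fubini.
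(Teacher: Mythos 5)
Your proof is correct, and it takes a more direct route than the paper's. Both arguments pivot on the same key fact, which the paper also states explicitly: $\Theta_{-1}\bsx_{-1}\sim\dnorm(0,I_d-\theta_1\theta_1\tran)$, a law determined by $\theta_1$ alone via the projector identity $\Theta_{-1}\Theta_{-1}\tran=I_d-\theta_1\theta_1\tran$. Where you diverge is in how that fact is deployed. You use independence of $x_1$ and $\bsx_{-1}$ to exhibit an explicit version of the conditional expectation, $\e(f_\Theta(\bsx)\giv\bsx_{-1})=H(\Theta_{-1}\bsx_{-1})$ with $H(\bsw)=\int_\real f(z\theta_1+\bsw)\varphi(z)\rd z$ depending only on $f$ and $\theta_1$, so the invariance of the law of $H(\bsw)$ is immediate from the invariance of the law of $\bsw$. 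The paper instead works through conditional probabilities: it compares $\Pr(f_\Theta(\bsx)\in A\giv\bsx_{-1}\in B)$ with $\Pr(f_{\wt\Theta}(\bsx)\in A\giv \Theta_{-1}\tran\wt\Theta_{-1}\bsx_{-1}\in B)$, uses that $\Theta_{-1}\tran\wt\Theta_{-1}\bsx_{-1}\sim\dnorm(0,I_{d-1})$ like $\bsx_{-1}$, deduces equality in law of the conditional probabilities, and then integrates over $\real$ to pass to conditional expectations. Your representation argument buys brevity and rigor: it avoids the slightly informal conditioning on events of the form $\bsx_{-1}\in B$ and the final integration step, and it handles measure-theoretic care cleanly (well-definedness of $H$ a.e.\ by Fubini and $\e(f(\bsx)^2)<\infty$, with any two versions of the conditional expectation agreeing a.s.\ and hence in law). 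What the paper's route buys is that, along the way, it establishes invariance of the law of $\Pr(f_\Theta(\bsx)\in A\giv\bsx_{-1})$ for each $A$, i.e.\ a statement about the conditional distribution and not just the conditional mean --- though your argument recovers this too by applying the same $H$ construction to the indicator $\indc{f(\cdot)\in A}$ in place of $f$.
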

\begin{proof}
Let $\Theta = (\theta_1\ \Theta_{-1})$
and $\wt\Theta=(\theta_1\ \wt\Theta_{-1})$
be orthogonal $d\times d$ matrices.
Define
$$f_\Theta(\bsx)=f(\Theta\bsx)=
f(x_1\theta_1+\Theta_{-1}\bsx_{-1})
\quad\text{and}\quad f_{\wt\Theta}(\bsx)=
f(\wt\Theta\bsx)=
f(x_1\theta_1 + \wt\Theta_{-1}\bsx_{-1}).
$$

Now $\Theta_{-1}\bsx_{-1}$ and $\wt\Theta_{-1}\bsx_{-1}$
both have the same $\dnorm(0,I-\theta_1\theta_1\tran)$ distribution
independently of $x_1\theta_1$.
Choose  $A_1\subset \real$ and $A_0$ in the support of
$\dnorm(0,I-\theta_1\theta_1\tran)$ with positive probability under
that (singular) distribution.
Then $\Pr( f_{\Theta}(\bsx)\in A_1\giv \Theta_{-1}\bsx_{-1}\in A_0)
=\Pr(f_{\wt\Theta}(\bsx)\in A_1\giv \wt\Theta_{-1}\bsx_{-1}\in A_0)$.

For $A\subset \real$ and $B\subset \real^{d-1}$,
\begin{align*}
\Pr( f_\Theta (\bsx)\in A\giv \bsx_{-1}\in B)
&=\Pr( f_\Theta(\bsx)\in A\giv \Theta_{-1}\bsx_{-1}\in \Theta_{-1}B)\\
&=\Pr( f_{\wt\Theta}(\bsx)\in A\giv \wt\Theta_{-1}\bsx_{-1}\in \Theta_{-1}B)\\
&=\Pr( f_{\wt\Theta}(\bsx)\in A\giv \Theta_{-1}\tran\wt\Theta_{-1}\bsx_{-1}\in B)
\end{align*}
where $\Theta_{-1}\tran\wt\Theta_{-1}\bsx_{-1}$
has the same $\dnorm(0,I_{d-1})$   distribution that $\bsx_{-1}$ has.
Then for $C\subset[0,1]$,
\begin{align*}
\Pr\bigl( \Pr( f_\Theta(\bsx)\in A\giv \bsx_{-1}\in B)\in C\bigr)
&=\Pr\bigl( \Pr( f_{\wt\Theta}(\bsx)\in A\giv \Theta_{-1}\tran\wt\Theta_{-1}\bsx_{-1}\in B)\in C\bigr)\\
&=\Pr\bigl( \Pr( f_{\wt\Theta}(\bsx)\in A\giv \bsx_{-1}\in B)\in C\bigr).
\end{align*}
It follows that the distribution of $\Pr( f_{\wt\Theta}(\bsx)\in A\giv \bsx_{-1})$
is the same as the distribution of $\Pr( f_\Theta(\bsx)\in A\giv \bsx_{-1})$.
Integrating over $\real$ we get that $\e( f_{\wt\Theta}(\bsx)\in A\giv \bsx_{-1})$
has the same distribution as  $\e(f_\Theta(\bsx)\in A\giv \bsx_{-1})$ does.
\end{proof}

Another consequence of Theorem~\ref{thm:uniquesobol}
is that $\ult^2_\theta(f)=\ult^2_1(f_\Theta)$ is unaffected by $\theta_2,\dots,\theta_d$.
Because the variance of $f$ is  unchanged by making an orthogonal matrix
transformation of its inputs,  the normalized Sobol' indices
$\ult^2_\theta/\sigma^2$ and $\olt^2_\theta/\sigma^2$ are also
invariant.

Finding the optimal $\theta$ would ordinarily require an expensive search
because every estimate of  $\olt_{\theta}^2$, for a given $\theta$ would require its own
collection of evaluations of $f$.
Using a Poincar\'e inequality in \cite{im2009derivative} we can bound that Sobol' index by
\begin{align*}
\olt_{\theta}^2(f)&
\le \e(\theta\tran\nabla f(\bsx))^2)
=\theta\tran C\theta.
\end{align*}
The active subspace direction thus maximizes an upper bound on the Sobol' index
for a projection. Next we develop a deeper correspondence between these two measures.

For a unit vector $\theta\in\real^d$, we can
write $f(\bsx) = f(\theta\theta\tran\bsx + (I-\theta\theta\tran)\bsx)$.
If $\bsx,\bsz$ are independent $\dnorm(0,I)$ vectors then we can change
the component of $\bsx$ parallel to $\theta$ by changing the argument
of $f$ to be
$\theta\theta\tran\bsz + (I-\theta\theta\tran)\bsx$.
This leaves the resulting point unchanged in the $d-1$ dimensional space
orthogonal to $\theta$.
Let $\tilde x=\theta\tran\bsx$ and $\tilde z=\theta\tran\bsz$.
Then $\tilde x, \tilde z\sim\dnorm(0,1)$ and $(I-\theta\theta\tran)\bsx\sim\dnorm(0,I-\theta\theta\tran)$
are all independent.  If $f$ is differentiable, then by the mean value theorem
\begin{align*}
&f(\theta\theta\tran\bsz + (I-\theta\theta\tran)\bsx)
-f(\theta\theta\tran\bsx + (I-\theta\theta\tran)\bsx)
=
\theta\tran\nabla f(\theta\tilde y + (I-\theta\theta\tran)\bsx)(\tilde z-\tilde x)
\end{align*}
for a real number $\tilde y$ between $\tilde x$ and $\tilde z$.
Using the Jansen formula, the Sobol' index for this projection is
\begin{align}\label{eq:sobolforprojection}
\frac12
\theta\tran
\e\Bigl(
(\tilde z-\tilde x)^2
\nabla f(\theta\tilde y + (I-\theta\theta\tran)\bsx)
\nabla f(\theta\tilde y + (I-\theta\theta\tran)\bsx)\tran
\Bigr)
\theta
\end{align}
which matches $\theta\tran\e(\nabla f(\bsx)\nabla f(\bsx)\tran)\theta$
over a $d-1$ dimensional subspace
but differs from it as follows.  First, it includes a weight factor $(\tilde z-\tilde x)^2$
that puts more emphasis on pairs of inputs where
$\theta\tran\bsx$ and $\theta\tran\bsz$ are far from each other.
Second, the evaluation point projected onto $\theta$ equals $\tilde y$
which lies between two independent $\dnorm(0,1)$
variables instead of having the $\dnorm(0,1)$ distribution, and just
where it lies between them depends on details of $f$ and there could
be more than one such $\tilde y$ for some $f$.
The formula simplifies in an illustrative way for quadratic functions $f$.

\begin{proposition}\label{prop:sobolforquadratic}
If $f:\real^d\to\real$ is a quadratic function and $\theta\in\real^d$
is a unit vector,  then the Sobol' index $\olt_\theta^2$ is
\begin{align}\label{eq:sobolforquadratic}
\theta\tran
\e\biggl(
\nabla f\Bigl(\frac{\theta\theta\tran\bsx}{\sqrt{2}} + (I-\theta\theta\tran)\bsx\Bigr)
\nabla f\Bigl(\frac{\theta\theta\tran\bsx}{\sqrt{2}} + (I-\theta\theta\tran)\bsx\Bigr)\tran
\biggr)
\theta.
\end{align}
\end{proposition}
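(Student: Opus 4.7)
The plan is to start from the Sobol' index representation already derived in \eqref{eq:sobolforprojection} and exploit the fact that when $f$ is quadratic, its gradient $\nabla f$ is affine in its argument. Writing $f(\bsx) = c + b\tran\bsx + \tfrac12\bsx\tran H\bsx$ for some constant $c$, vector $b\in\real^d$, and symmetric Hessian $H\in\real^{d\times d}$, I would expand $f(\theta\tilde z+(I-\theta\theta\tran)\bsx) - f(\theta\tilde x+(I-\theta\theta\tran)\bsx)$ directly and factor out $(\tilde z - \tilde x)$. The key observation is that the quadratic term produces $\frac{\tilde z^2-\tilde x^2}{2}\theta\tran H\theta = (\tilde z-\tilde x)\cdot\frac{\tilde z+\tilde x}{2}\theta\tran H\theta$, so the mean value theorem holds exactly with the explicit midpoint $\tilde y = (\tilde x+\tilde z)/2$, rather than with a generic point that might depend on $\bsx$.

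With this explicit $\tilde y$ in hand, the expression \eqref{eq:sobolforprojection} becomes
\begin{equation*}
\olt_\theta^2 = \tfrac12\,\theta\tran\e\Bigl((\tilde z-\tilde x)^2\,\nabla f\bigl(\theta\tfrac{\tilde x+\tilde z}{2}+(I-\theta\theta\tran)\bsx\bigr)\nabla f\bigl(\theta\tfrac{\tilde x+\tilde z}{2}+(I-\theta\theta\tran)\bsx\bigr)\tran\Bigr)\theta.
\end{equation*}
Next I would use two classical facts about independent $\dnorm(0,1)$ variables: first, the sum $\tilde x+\tilde z$ and the difference $\tilde z-\tilde x$ are uncorrelated, hence (being jointly Gaussian) independent; second, $(\tilde x+\tilde z)/2\sim\dnorm(0,1/2)$, which has the same distribution as $\tilde x/\sqrt{2}$. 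Since $(I-\theta\theta\tran)\bsx$ is independent of $\tilde x=\theta\tran\bsx$ and of $\tilde z$, the variable $\tilde y=(\tilde x+\tilde z)/2$ is also independent of $(I-\theta\theta\tran)\bsx$, and so the pair $(\tilde y,(I-\theta\theta\tran)\bsx)$ has the same joint distribution as $(\tilde x/\sqrt{2},(I-\theta\theta\tran)\bsx)$.

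Combining these observations, the factor $(\tilde z-\tilde x)^2$ factors out of the expectation and contributes $\e((\tilde z-\tilde x)^2)=2$, which cancels the leading $\tfrac12$. Replacing $\theta\tilde y$ in distribution by $\theta\theta\tran\bsx/\sqrt{2}$ in the remaining gradient expectation yields exactly the claimed expression \eqref{eq:sobolforquadratic}.

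I expect the main subtlety to be the bookkeeping for the explicit midpoint rather than any analytic difficulty: one has to verify carefully that for quadratic $f$ the MVT point is literally $(\tilde x+\tilde z)/2$ (not merely almost surely some random point between $\tilde x$ and $\tilde z$) and then invoke the sum/difference independence at exactly the right moment to decouple $(\tilde z-\tilde x)^2$ from the gradient outer product. Everything else is a change of variable in distribution.
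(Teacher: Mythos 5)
Your proposal is correct and follows essentially the same route as the paper's own proof: substitute into~\eqref{eq:sobolforprojection}, use that for quadratic $f$ the mean value point is exactly $\tilde y=(\tilde x+\tilde z)/2\sim\dnorm(0,1/2)$, invoke the sum/difference independence to factor out $\e((\tilde z-\tilde x)^2)=2$, and replace $\theta\tilde y$ in distribution by $\theta\theta\tran\bsx/\sqrt{2}$. The only difference is that you explicitly verify the exact-midpoint claim by expanding $f(\bsx)=c+b\tran\bsx+\tfrac12\bsx\tran H\bsx$, a detail the paper states without derivation.
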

\begin{proof}
If $f$ is quadratic, then $\tilde y = (\tilde z+\tilde x)/2\sim\dnorm(0,1/2)$ and
$\tilde z-\tilde x\sim\dnorm(0,2)$ and $(I-\theta\theta\tran)\bsx$
are all independent.  Then $\e((\tilde z-\tilde x)^2)=2$
and $\theta\tilde y$ has the same distribution as $\theta\theta\tran\bsx/\sqrt{2}$
which is also independent of $(\tilde z-\tilde x)$ and $(I-\theta\theta\tran)\bsx$.
Making those substitutions in~\eqref{eq:sobolforprojection} yields~\eqref{eq:sobolforquadratic}.
\end{proof}

The Sobol' index in equation~\eqref{eq:sobolforquadratic}
matches the quantity optimized by the first active subspace
apart from the divisor $\sqrt{2}$ affecting one of the $d$ dimensions.
We can also show directly that for $\bsx\sim\dnorm(0,I)$
and $f(\bsx)=(1/2)\bsx\tran A\bsx+b\tran \bsx$ for
a symmetric matrix $A$, that the Sobol' criterion
reduces to $\theta\tran A^2\theta +(\theta\tran b)^2-(1/2)(\theta\tran A\theta)^2$
compared to an active subspace criterion of
$\theta\tran A^2\theta +(\theta\tran b)^2$.

\subsection{Active subspace}
Because $C=\e(\nabla f(\bsx)\nabla f(\bsx)\tran)$ is positive semi-definite (PSD), it has the eigen-decomposition $C=\Theta D \Theta\tran$, where $\Theta=(\theta_1,\ldots,\theta_d)\in\real^{d\times d}$ is an orthogonal matrix consisted of eigenvectors of $C$,
and $D=\text{diag}(\lambda_1,\ldots,\lambda_d)$ with $\lambda_1\geq \ldots\geq\lambda_d\geq 0$ being the eigenvalues.
Constantine et al.\,\cite{constantine2014active} prove that there exists a constant $c$ such that
\begin{align}
\e\bigl(\bigl( f(\bsx) - \e(f(\bsx)\giv \Theta[1{:}r]\tran \bsx)\bigr)^2\bigr)
\leq c(\lambda_{r+1}+\cdots+\lambda_{d})
\label{equ: poincare}
\end{align}
for all $f$ with a square integrable gradient.
In general, the Poincar\'e constant $c$ depends on the support of the function and the probability measure. But for multivariate standard Gaussian distribution, the Poincar\'e constant is always 1 \citep{chen1982inequality,parente2020generalized}. This is because $\Theta[1{:}r]\tran \bsx$ and $\Theta[-(1{:}r)]\tran \bsx$ are independent standard Gaussian variables thus
\begin{align*}
\e\bigl(\bigl( f(\bsx) - \e(f(\bsx)\giv \Theta[1{:}r]\tran \bsx)\bigr)^2\mid \Theta[1{:}r]\tran \bsx\bigr )
\leq \lambda_{r+1}+\cdots+\lambda_{d}
\end{align*}
for all $\Theta[1{:}r]\tran\bsx$.


In our problem, we take $r=1$ and use
$\e( f(\bsx)\giv \theta\tran\bsx)$ where $\theta$
is the first column of $\Theta$.
Because we will end up with a $d-1$ dimensional integration problem it is convenient
in an implementation to make $\theta\tran\bsx$ the last variable not the first.
For instance, one would use the first $d-1$ components in a Sobol' sequence
not components $2$ through $d$. 
Taking $\theta$ to be the first column of  $\Theta$, we compute with
$$g(\bsx_{-d})=
\int_{-\infty}^\infty f(\theta x_d+\Psi\bsx_{-d})\rd x_d$$
using a quadrature rule of negligible error or a closed form expression
if a suitable one is available for an orthonormal
matrix $\Psi\in\real^{d\times(d-1)}$ that is orthogonal to $\theta$.
We then integrate this $g$ over $d-1$ variables by RQMC.
We can use $\Psi=\Theta[2{:}d]$.
Or if we want to avoid the cost of computing the
full eigendecomposition of $C$ we can find $\theta_1$
by a power iteration and then use a Householder transformation
\[
\Theta=I-2\bsw\bsw\tran,\quad \text{where}\quad\bsw=\frac{\theta-e_1}{\|\theta-e_1\|}
\]
and $e_1=(1,0,0,\ldots,0)\tran$.
This $\Theta$ is an orthogonal matrix whose first column is $\theta$
and again we can choose $\Psi = \Theta[2{:}d]$.
In our numerical work, we have used $\Theta[2{:}d]$ instead of the Householder transformation
because of the effective dimension
motivation for those eigenvectors given by \cite{xiao:wang:2019}.

In practice, we must estimate $C$.
In the above description, we replace $C$ by
\begin{align}
\wh C =\frac{1}{M}\sum_{i=0}^{M-1} \nabla f(\bsx_i)\nabla f(\bsx_i)\tran,
\label{equ: Sigma hat}
\end{align}
for an RQMC generated sample with $\bsx_i\sim\dnorm(0,I_d)$
and then define $\theta$ and $\Theta_{-1}$ using $\wh C$ in place of $C$.
We summarize the procedure in Algorithm \ref{algo}.
\begin{algorithm}[t]
\caption{pre-integration with active subspace}
\label{algo}
\SetKwInOut{Input}{Input}
\SetKwInOut{Output}{Output}
\Input{Integrand $f$, number of samples $M$ to compute $\hat C$, number of samples $n$ to compute $\hat\mu$}
\Output{An estimate $\hat\mu$ of $\int_{\R^d}f(\bsx)\varphi(\bsx) d\bsx $}
\tcc{Find active subspaces}
Take $\bsx_0,\ldots,\bsx_{M-1}\sim\dnorm(0,I_d)$ by RQMC.\\
Compute $\wh C=\frac{1}{M}\sum_{i=0}^{M-1}\nabla f(\bsx_i)\nabla f(\bsx_i)\tran $.\\
Compute the eigen-decomposition $\wh C=\wh \Theta \wh D\wh \Theta\tran$.\\

\tcc{Pre-integration}
Let $\theta$ be the first column of $\wh \Theta$\\
Compute the pre-integrated function $\e( f(\bsx)\giv \theta\tran\bsx)$ by a closed form or quadrature rule
\\
\tcc{RQMC integration}
Take $\bsx_{0},\ldots,\bsx_{n-1}\sim\dnorm(0,I_{d-1})$ by RQMC.\\
Let $\hat\mu=\frac{1}{n}\sum_{i=0}^{n-1} \tf(\bsx_i)$.
\end{algorithm}



Using our prior notation we can now describe the approach
of \cite{xiao2018conditional} more precisely.
They first pre-integrate one variable in closed form producing a $d-1$ dimensional
integrand.
They then apply gradient GPCA  to the pre-integrated function to find a good $d-1$
dimensional rotation matrix.
\cite{constantine2014active}.
That is, they first find $h(\bsx_{2:d}):=\e(f(\bsx)\giv\bsx_{2:d})$, then compute
\begin{align*}
\widehat{\widetilde C}=\frac{1}{M}\sum_{i=0}^{M-1}\nabla h(\bsx_i)\nabla h(\bsx_i)\tran\in\real^{(d-1)\times (d-1)},\quad
 \bsx_i\sim\dnorm(0,I_{d-1}),\numberthis\label{equ: hat tilde C}
\end{align*}
using RQMC points $\bsx_i$.
Then they find the eigen-decomposition $\widehat{\widetilde C}=\wh V\wh \Lambda \wh V\tran$. Finally, they use RQMC to integrate the function $h(\wh V\bsx)$ where $\bsx\sim\dnorm(0,I_{d-1})$.
The main difference is that they apply pre-integration to the original integrand $f(\bsx)$ while we apply pre-integration to the rotated integrand $f_\Theta(\bsx)=f(\Theta\bsx)$. They conduct GPCA in the end as an approach to reduce effective dimension, while we conduct a similar GPCA (active subspace method) at the beginning to find the important subspace.

\section{Application to option pricing}\label{sec: option pricing}

Here we study some Gaussian integrals arising from financial valuation.
We assume that an asset price $S_t$, such as a stock, follows a geometric Brownian motion satisfying the stochastic differential equation (SDE)
\begin{align*}
\mrd S_t=r S_t\mrd t+\sigma S_t \mrd B_t,
\end{align*}
where $B_t$ is a Brownian motion. Here, $r$ is the interest rate and $\sigma>0$ is the constant volatility for the asset.
For an initial price $S_0$, the SDE above has a unique solution
\begin{align*}
S_t=S_0\exp\Bigl(\Bigl(r-\frac{\sigma^2}{2}\Bigr)t+\sigma B_t\Bigr).
\end{align*}
Suppose the maturity time of the option is $T$.
In practice, we simulate discrete Brownian motion. We call $B$ a $d$-dimensional discrete Brownian motion if $B$ follows a multivariate Gaussian distribution with mean zero and covariance $\Sigma$ with $\Sigma_{ij}=\Delta t\min(i, j)$, where $\Delta t=T/d$ is the length of each time interval
and $1\le i,j\le d$. To sample a discrete Brownian motion, we can first find a $d\times d$ matrix $R$ such that $RR\tran=\Sigma$, then generate a standard Gaussian variable $\bsz\sim\dnorm(0,I_d)$, and let $B=R\bsz$. 
 Taking $R$ to be the lower triangular matrix in the Cholesky decomposition of $\Sigma$ yields the \emph{standard construction}.
Using the usual eigen-decomposition $\Sigma=U\Lambda U\tran$,
we can take $R=U\Lambda^{1/2}$. This is called the \emph{principal component analysis (PCA)} construction.
For explicit forms of both these choices of $R$, see \cite{glasserman2004monte}.

\subsection{Option with one asset}

When we use the matrix $R$, we can approximate $S_{j\Delta t}$ by
\begin{align*}
S_j&=S_0\exp\Bigl(\Bigl(r-\frac{\sigma^2}{2}\Bigr)j\Delta t + \sigma B_{j}\Bigr),
\quad 1\le j\le d
\end{align*}
where $B=R\bsz$ is the discrete Brownian motion. The arithmetic average of the stock price is given by
\begin{align*}
\bar S(R,\bsz)= \frac{S_0}{s}\sum_{j=1}^d\exp\Bigl(\Bigl(r-\frac{\sigma^2}{2}\Bigr)j\Delta t+\sigma\sum_{k=1}^d R_{jk}z_k \Bigr).
\end{align*}
Then the expected payoff of the arithmetic average Asian call option with strike price $K$ is $\e\bigl((\bar S(A,\bsz)-K)_+\bigr)$,
where the expectation is taken over $\bsz\sim\dnorm(0,I_d)$.

Suppose that we want to marginalize over $z_1$ before computing the expectation
$\e((\bar S(A,\bsz)-K)_+)$. If $R_{j1}>0$ for all $1\leq j\leq d$, then $\bar S(R,\bsz)$ is increasing in $z_1$ for any value of $\bsz_{2:s}$. If we can find $\gamma=\gamma(\bsz_{2:s})$ such that
\begin{align}
\bar S(R,(\gamma,\bsz_{2:s}))=K,
\label{equ: S=K}
\end{align}
then the pre-integration step becomes
\begin{align*}
&\e((\bar S(A,\bsz)-K)_+\giv \bsz_{2:s} )\\
&= \int_{z_1\geq \gamma(\bsz_{2:s})} (\bar S(R,(z_1,\bsz_{2:s}))-K)\varphi(z_1)\rd z_1\\
&=
\frac{S_0}{d}\sum_{j=1}^d\exp\Bigl(\Bigl(r-\frac{\sigma^2}{2}\Bigr)j\Delta t + \sigma\sum_{k=2}^dR_{jk}z_k+\frac{\sigma^2R_{j1}^2}{2} \Bigr)\bar\Phi(\gamma-\sigma R_{j1})
-K\bar\Phi(\gamma),
\numberthis\label{equ: pre-int}
\end{align*}
where $\bar\Phi(x)=1-\Phi(x)$.
In practice, Equation \eqref{equ: S=K} can be solved by a root finding algorithm. For example, Newton iteration usually converges in only a few steps.

The condition that $R_{j1}>0$ for $1\le j\le d$ is satisfied when we use the standard construction or PCA construction of Brownian motion. Using the active subspace method, we are using
$\tilde R=R\Theta$ in the place of $R$
where $\Theta$ consists of the eigenvectors of
$C=\e(\nabla f(\bsz)\nabla f(\bsz)\tran)$, and here $f(\bsz)=(\bar S(A,\bsz)-K)_+$.
If every $\tR_{j1}$ is negative then we replace $\tR$ by $-\tR$.
We have not proved that the components of the first column of $\tR$ must
all have the same sign,
but that has always held for the integrands in our simulations.
As a result we have not had to use a numerical quadrature.


We compare Algorithm \ref{algo} with other methods in the option pricing example considered in \cite{xiao2018conditional} and \cite{he2019error}. Apart from the payoff function of call option, we also consider the Greeks: Delta, Gamma, Rho, Theta, and Vega.  These are defined in \cite{xiao2018conditional}.
We take the parameters $d=50$, $T=1$, $\sigma=0.4$, $r=0.1$, $S_0=K=100$ the same as in \cite{he2019error}.
We consider 4 methods:
\begin{itemize}
        \item \texttt{AS+pre-int}: our proposed active subspace pre-integration method (Algorithm \ref{algo}), which applies active subspace method to find the direction to pre-integrate,
        \item \texttt{pre-int+DimRed}: the method proposed in
\cite{xiao2018conditional}, which first pre-integrates $z_1$ and applies GPCA to conduct dimension reduction for the other
$d-1$ variables,
        \item \texttt{pre-int}: pre-integrating $z_1$ with no dimension reduction,
        \item \texttt{RQMC}: usual RQMC, and
        \item \texttt{MC}: plain Monte Carlo.
\end{itemize}

We vary $n$ from $2^3$ to $2^{17}$. For each $n$, we repeat the simulation 50 times and compute the root mean squared error (RMSE) defined by
\[
\sqrt{\frac{1}{50}\sum_{k=1}^{50}(\hat\mu^{(k)}-\mu)^2},
\]
where $\hat\mu^{(k)}$ is the estimate in the $k$-th replicate.
The true value $\mu$ is approximated by applying \texttt{pre-int+DimRed} using PCA construction with $n=2^{17}$ RQMC samples and averaging over 30 independent replicates. We chose this one because it works well and we wanted to
avoid using \texttt{AS-pre-int} in case the model used for ground
truth had some advantage in reported accuracy.

The root mean square error (RMSE) is plotted versus the sample size on the log-log scale.
For the methods \texttt{AS+pre-int} and \texttt{pre-int+DimRed}, we use $M=128$ samples to estimate $C$ as in \eqref{equ: Sigma hat}.
We approximate the gradients of the original integrand and the pre-integrated integrand by the finite difference
\[
\nabla f(x)\approx \left(\frac{f(x+\ep \bfe_1)-f(x)}{\ep},\ldots,\frac{f(x+\ep \bfe_{d-1})-f(x)}{\ep}
\right)\tran,\quad \ep=10^{-6},
\]
matching the choice in \cite{xiao2018conditional}.
We chose a small value of $M$ to keep the costs comparable to plain RQMC. Also
because $\theta$ is a local optimum of $\theta\tran C\theta$,
we have $\hat\theta\tran C\hat\theta=\theta\tran C\theta+
O(\Vert\hat\theta-\theta\Vert^2)$ so there are diminishing
returns to accurate estimation of $\theta$.
Finally, any $\theta$ where $f$ varies along $\theta\tran\bsx$
brings a variance reduction.

We consider both the standard construction (Figure \ref{fig: std}) and the PCA construction (Figure \ref{fig: pca}) of Brownian motion.
Several observations are in order:
\begin{enumerate}[(a)]
\item With the standard construction,  \texttt{AS+pre-int} dominates all the other methods for five of the six test functions and is tied for best with \texttt{pre-int+DimRed}
for the other one (Gamma).

\item With the PCA construction,  \texttt{AS+pre-int},  \texttt{pre-int+DimRed}
and \texttt{pre-int} are the best methods
for the payoff, Delta, Theta, and Vega
and are nearly equally good.

\item For Rho, \texttt{pre-int+DimRed} and \texttt{pre-int}
are best, while for Gamma, \texttt{AS+pre-int} is best.
\end{enumerate}

The performance of active subspace pre-integration
is the same under either the standard or the PCA
construction by invariance.
For these Asian options it is already well known
that the PCA is especially effective.
Active subspace pre-integration finds something
almost as good without special knowledge, coming out
better in one example, worse in another and essentially
the same in the other four.

\begin{figure}
\begin{subfigure}{.5\textwidth}
\includegraphics[width=\textwidth]{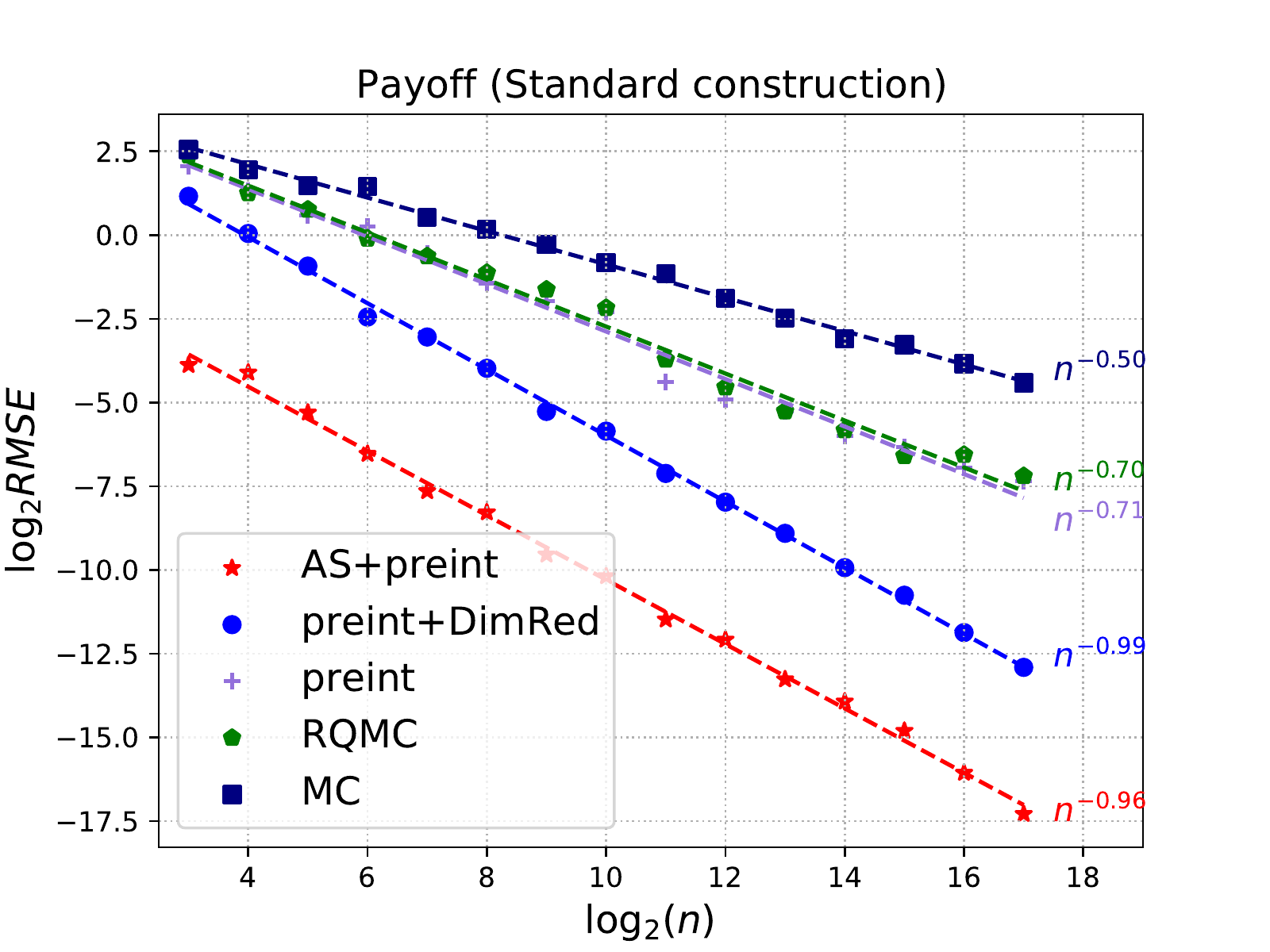}
\end{subfigure}
\begin{subfigure}{.5\textwidth}
\includegraphics[width=\textwidth]{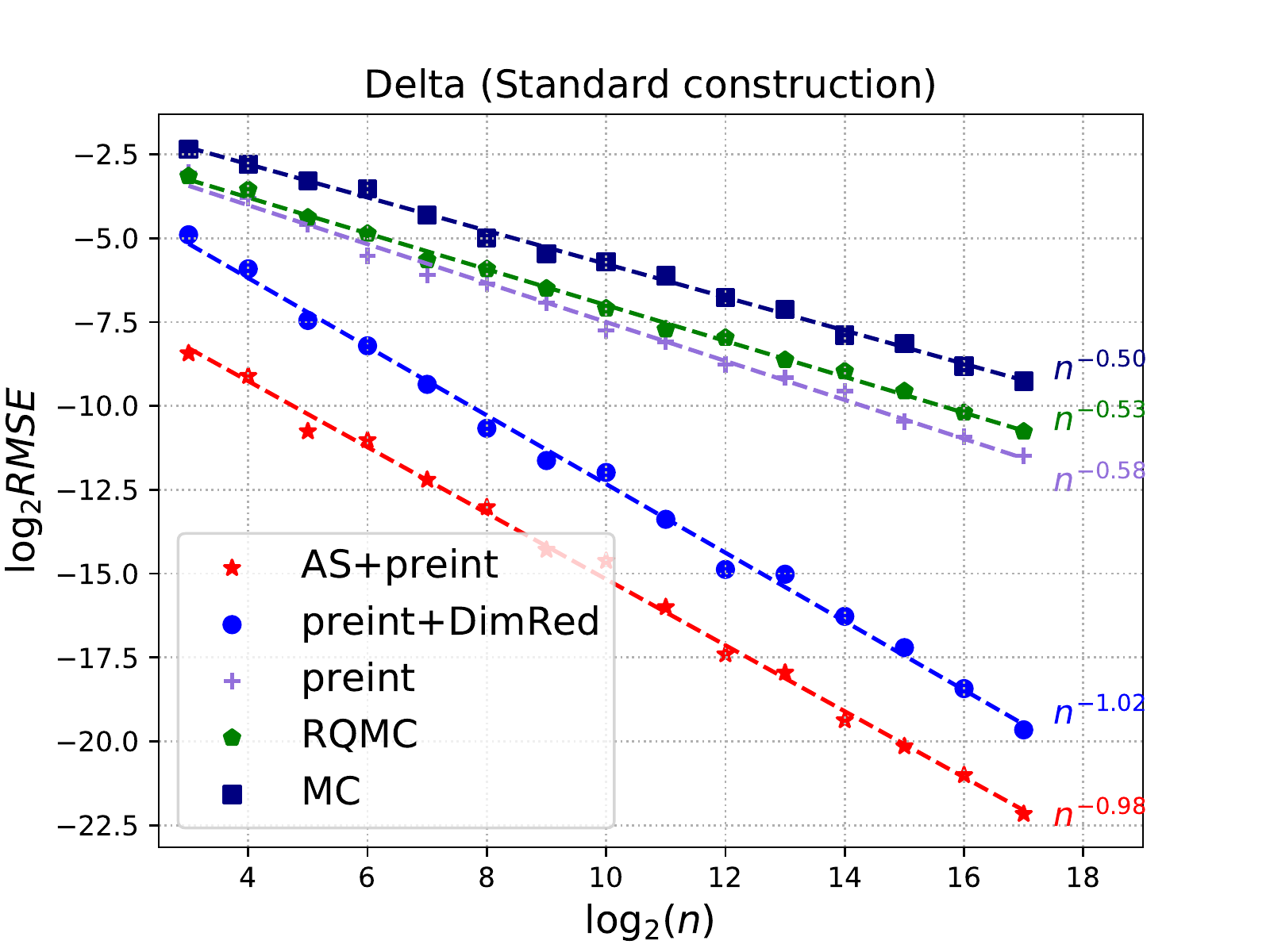}
\end{subfigure}
\begin{subfigure}{.5\textwidth}
\includegraphics[width=\textwidth]{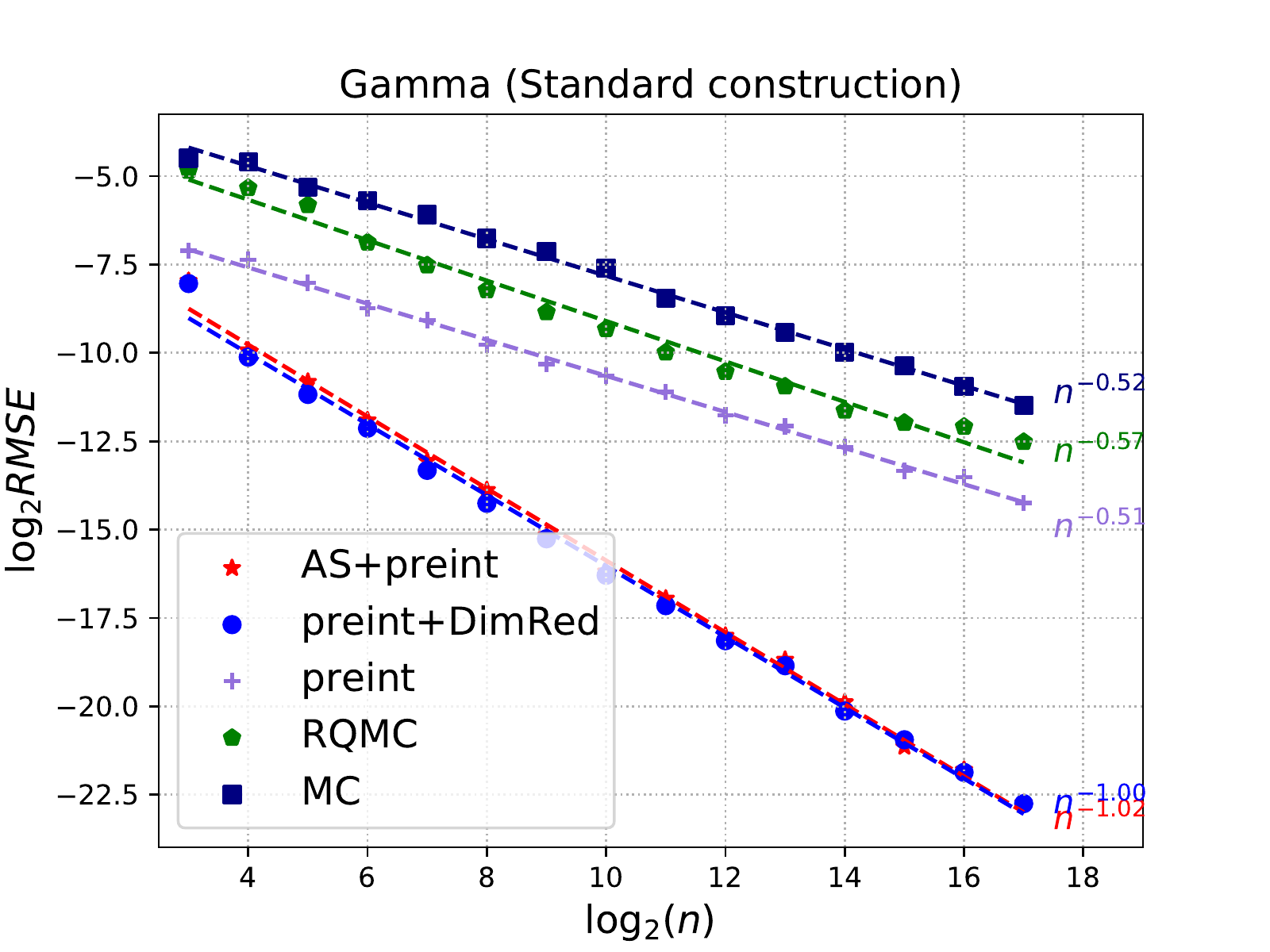}
\end{subfigure}
\begin{subfigure}{.5\textwidth}
\includegraphics[width=\textwidth]{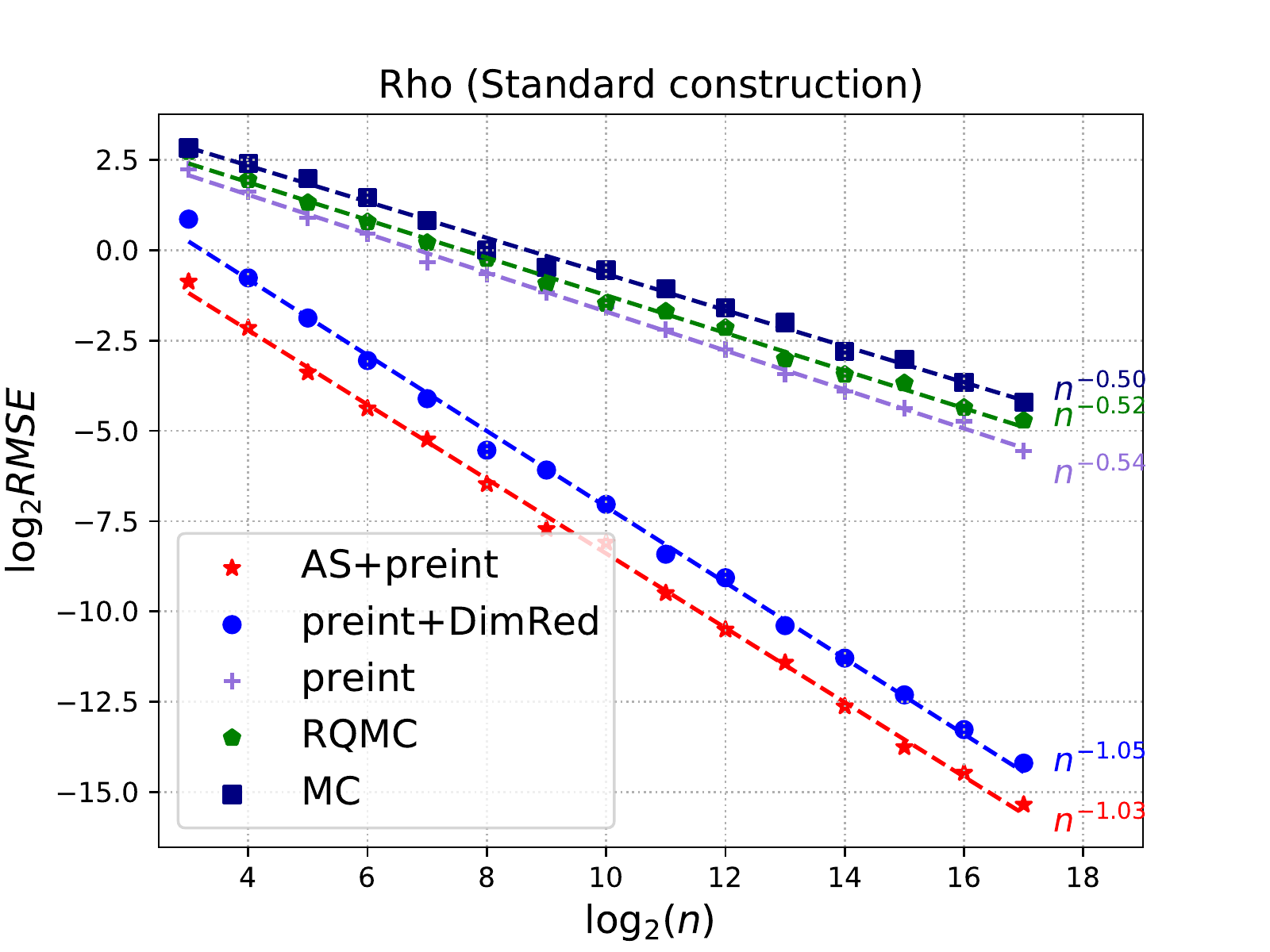}
\end{subfigure}
\begin{subfigure}{.5\textwidth}
\includegraphics[width=\textwidth]{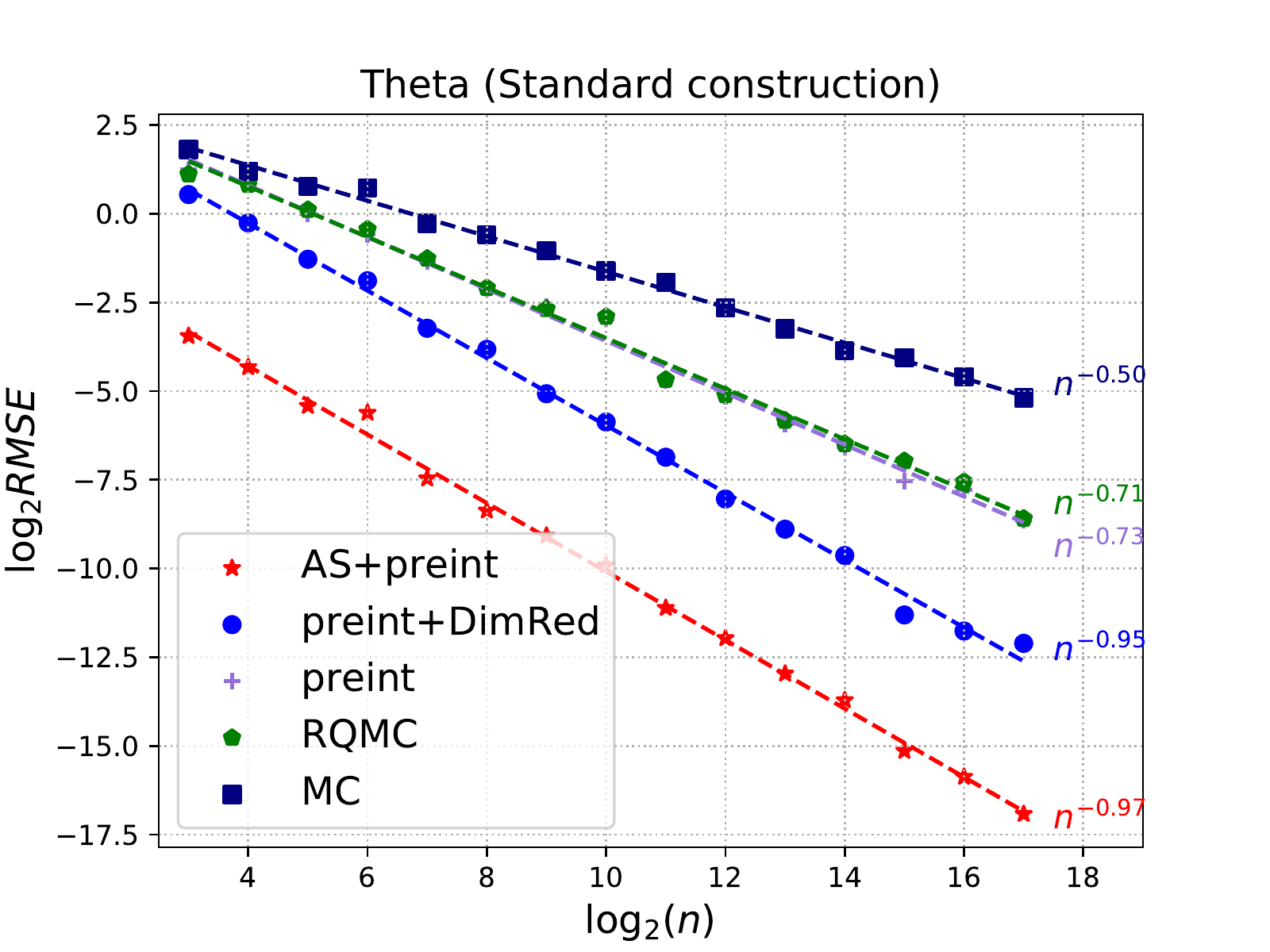}
\end{subfigure}
\begin{subfigure}{.5\textwidth}
\includegraphics[width=\textwidth]{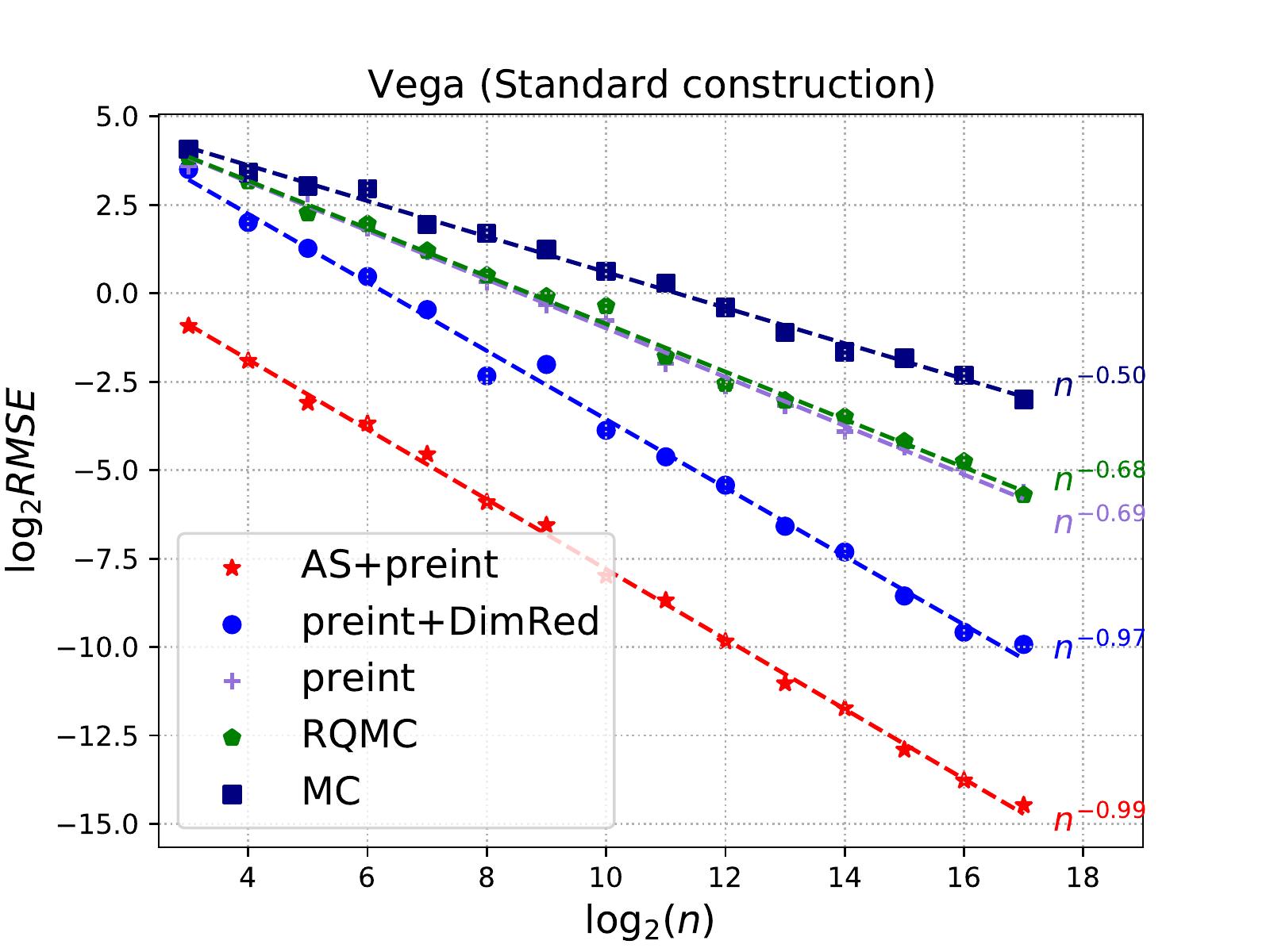}
\end{subfigure}
\caption{Single asset option. Standard construction of Brownian motion with $d=50$. This and subsequent figures include
least squares estimated slopes on the log--log scale.}
\label{fig: std}
\end{figure}

\begin{figure}
\begin{subfigure}{.5\textwidth}
\includegraphics[width=\textwidth]{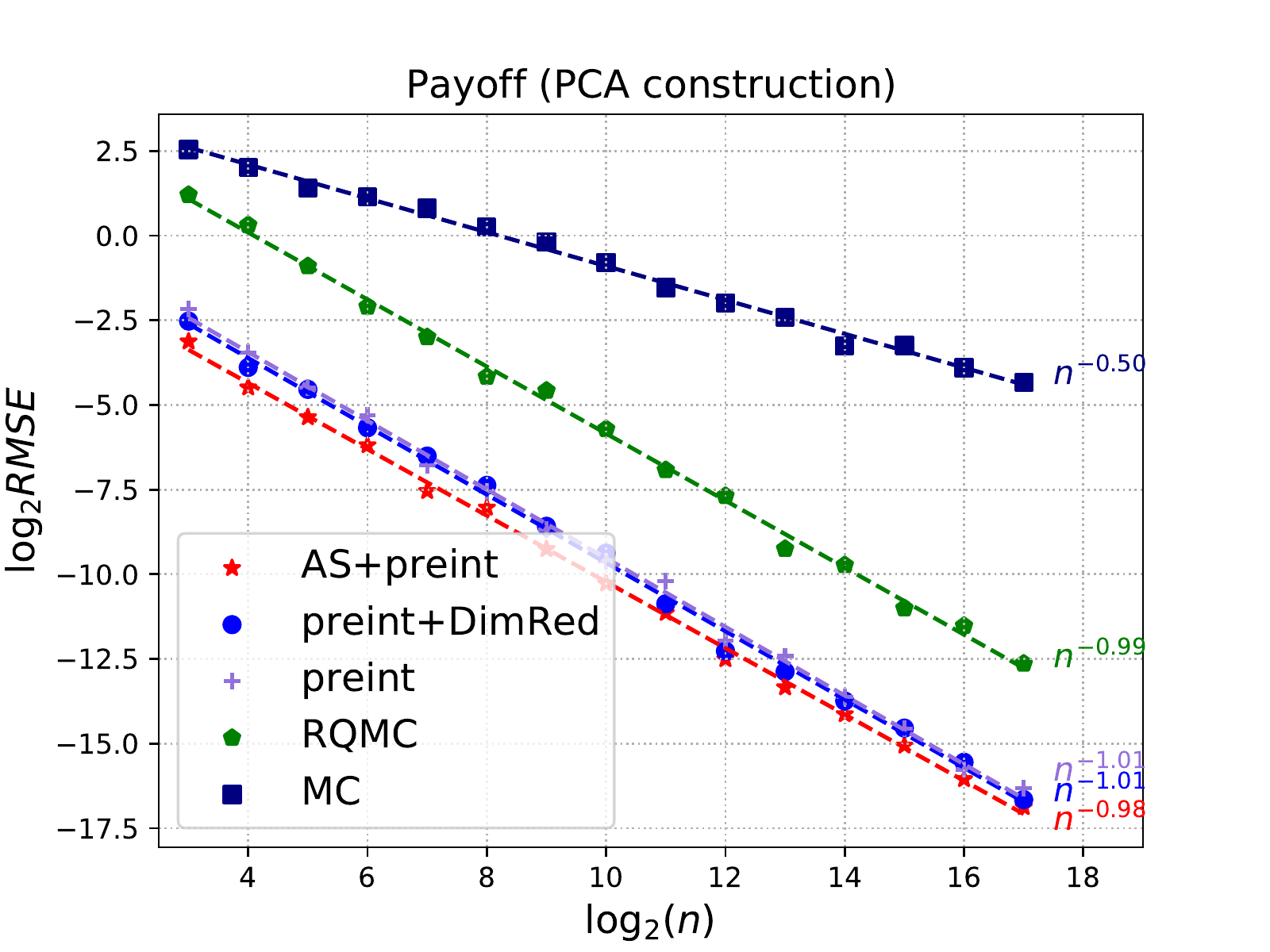}
\end{subfigure}
\begin{subfigure}{.5\textwidth}
\includegraphics[width=\textwidth]{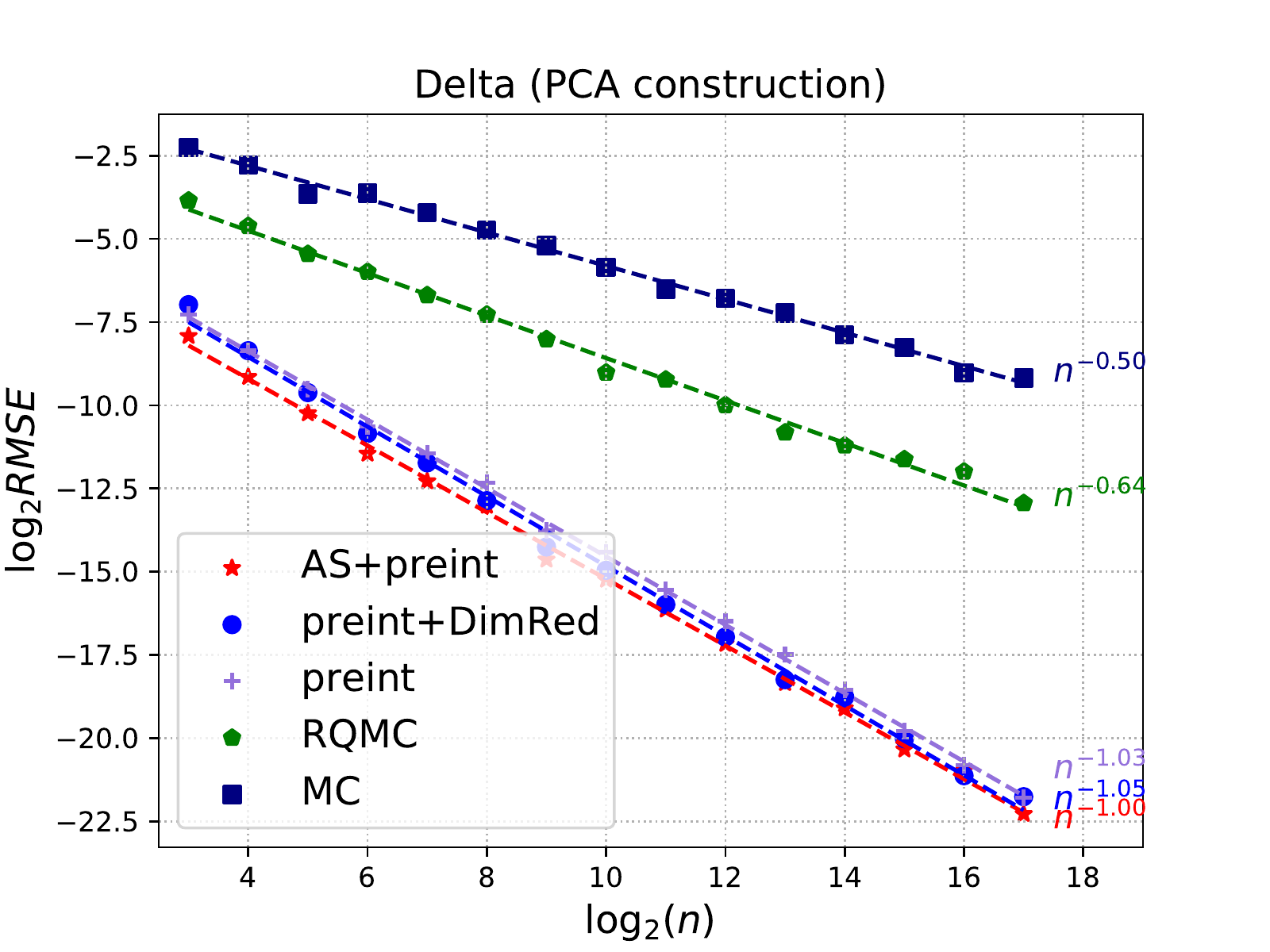}
\end{subfigure}
\begin{subfigure}{.5\textwidth}
\includegraphics[width=\textwidth]{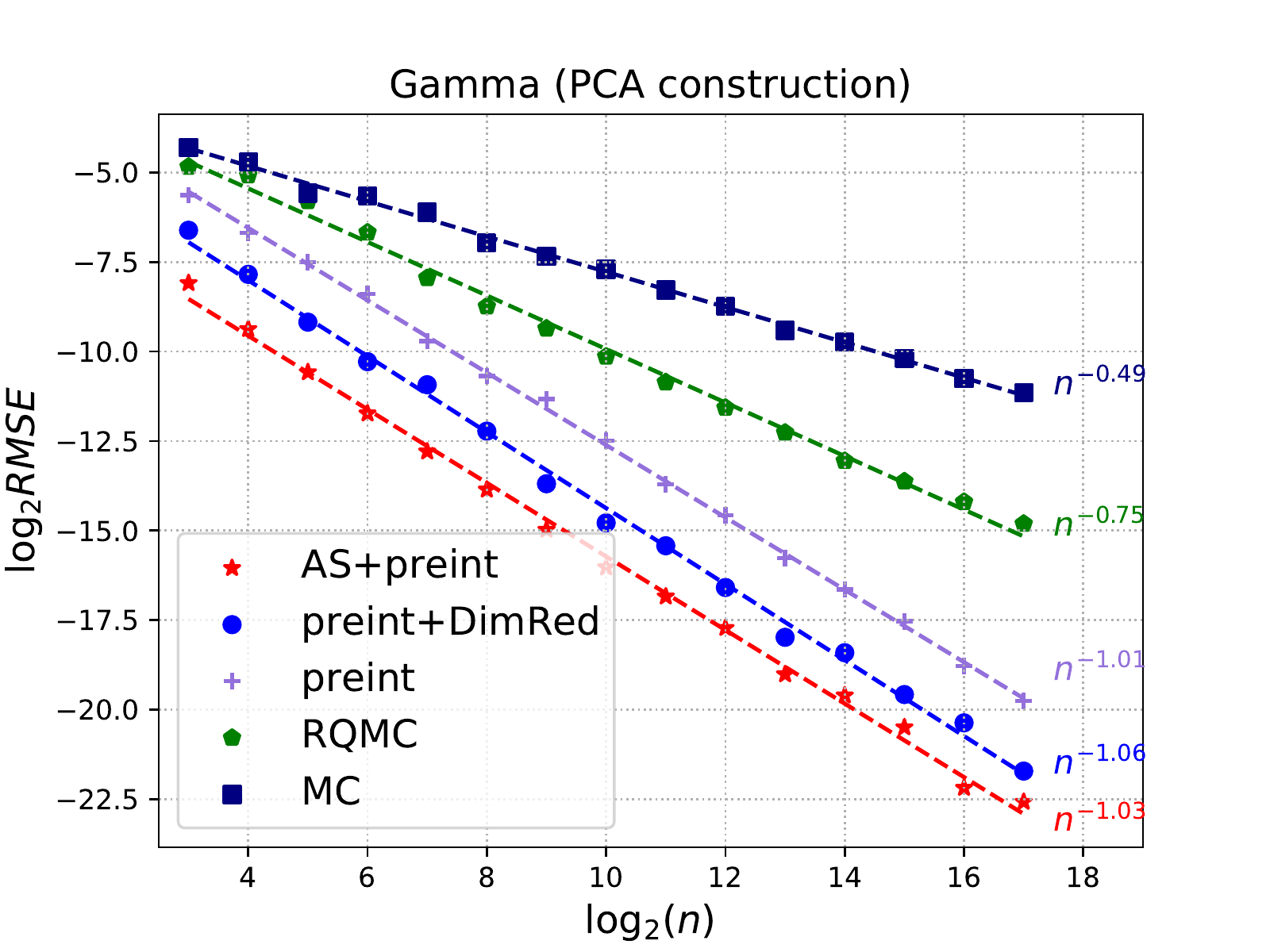}
\end{subfigure}
\begin{subfigure}{.5\textwidth}
\includegraphics[width=\textwidth]{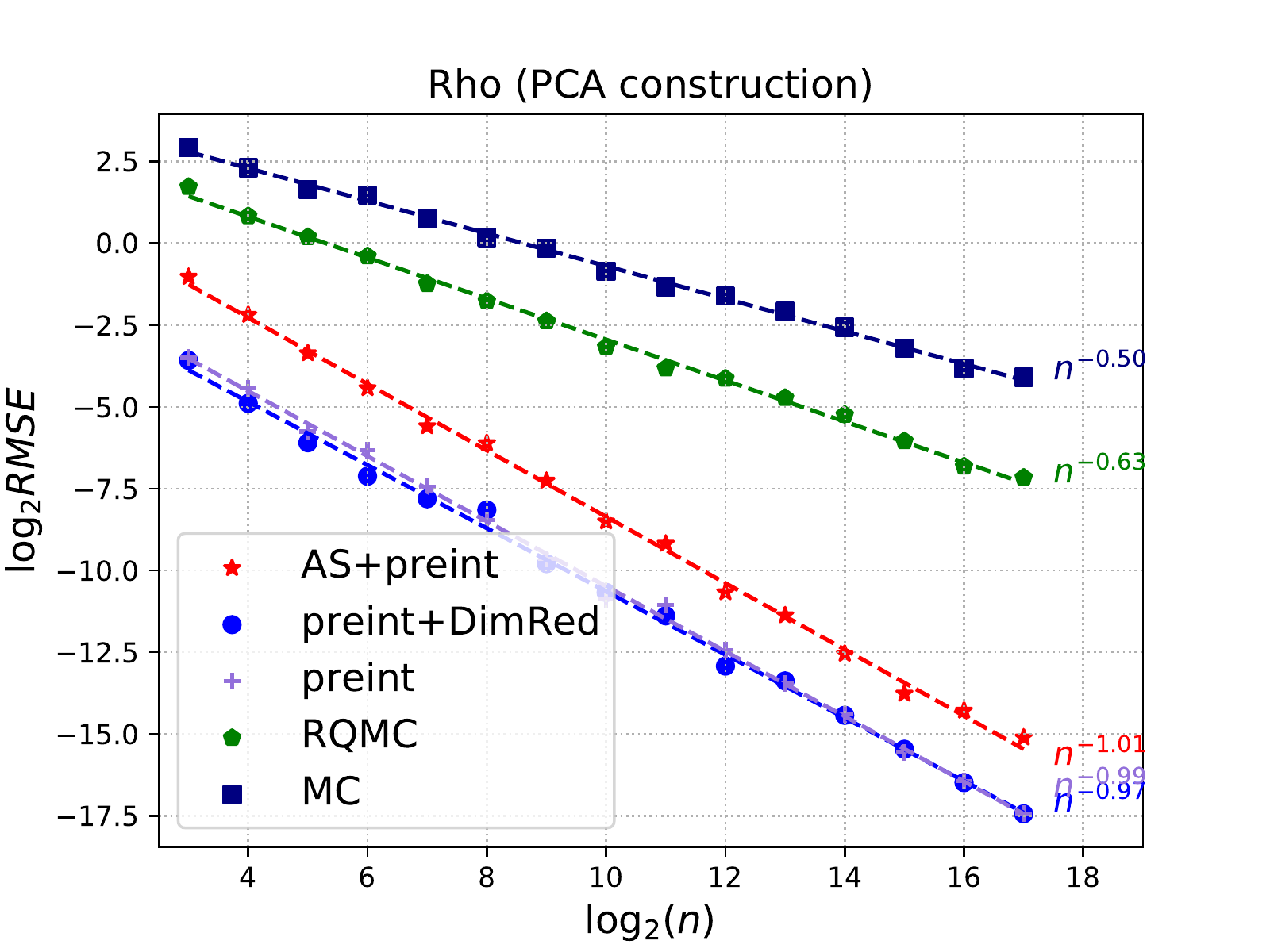}
\end{subfigure}
\begin{subfigure}{.5\textwidth}
\includegraphics[width=\textwidth]{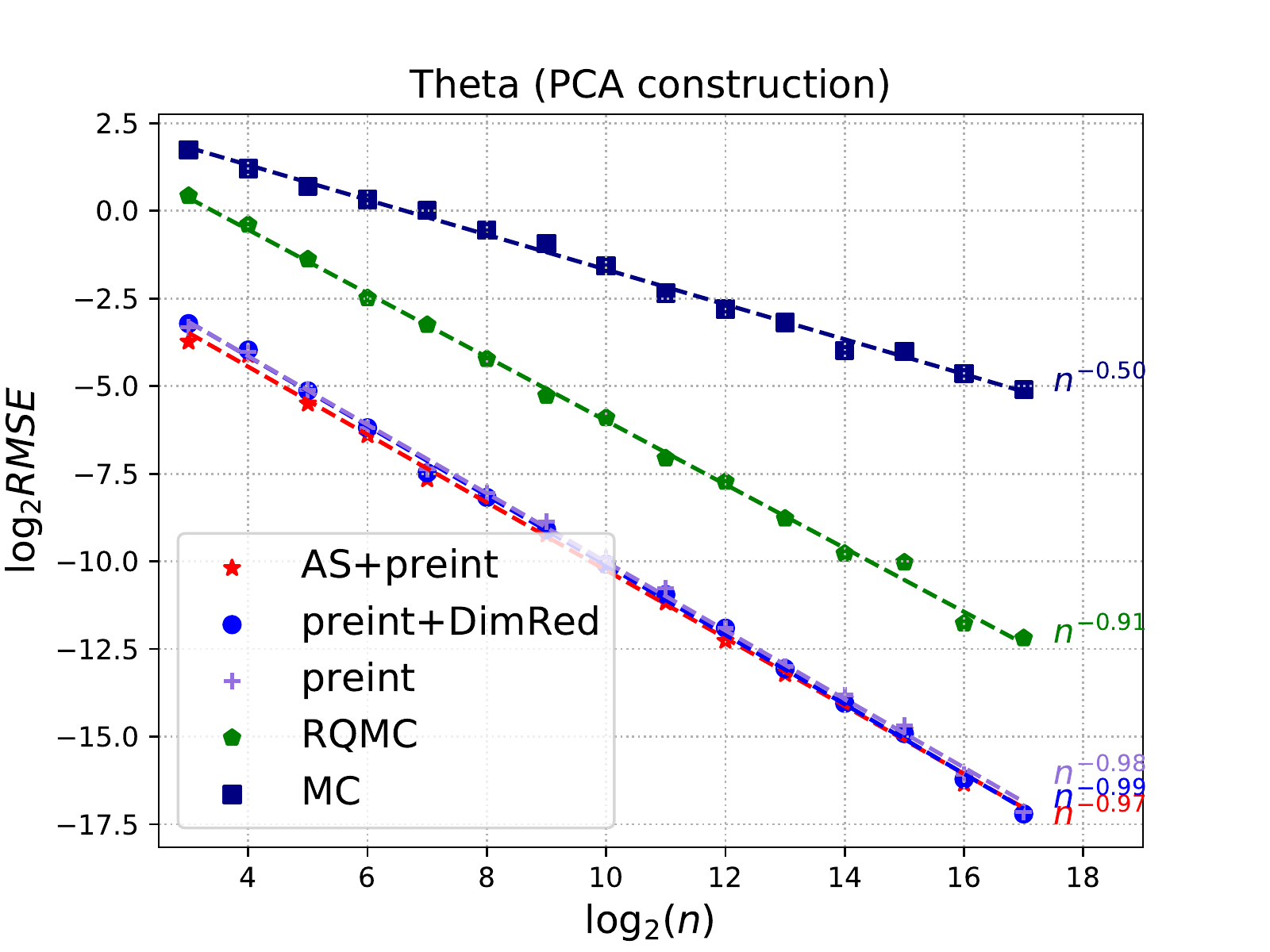}
\end{subfigure}
\begin{subfigure}{.5\textwidth}
\includegraphics[width=\textwidth]{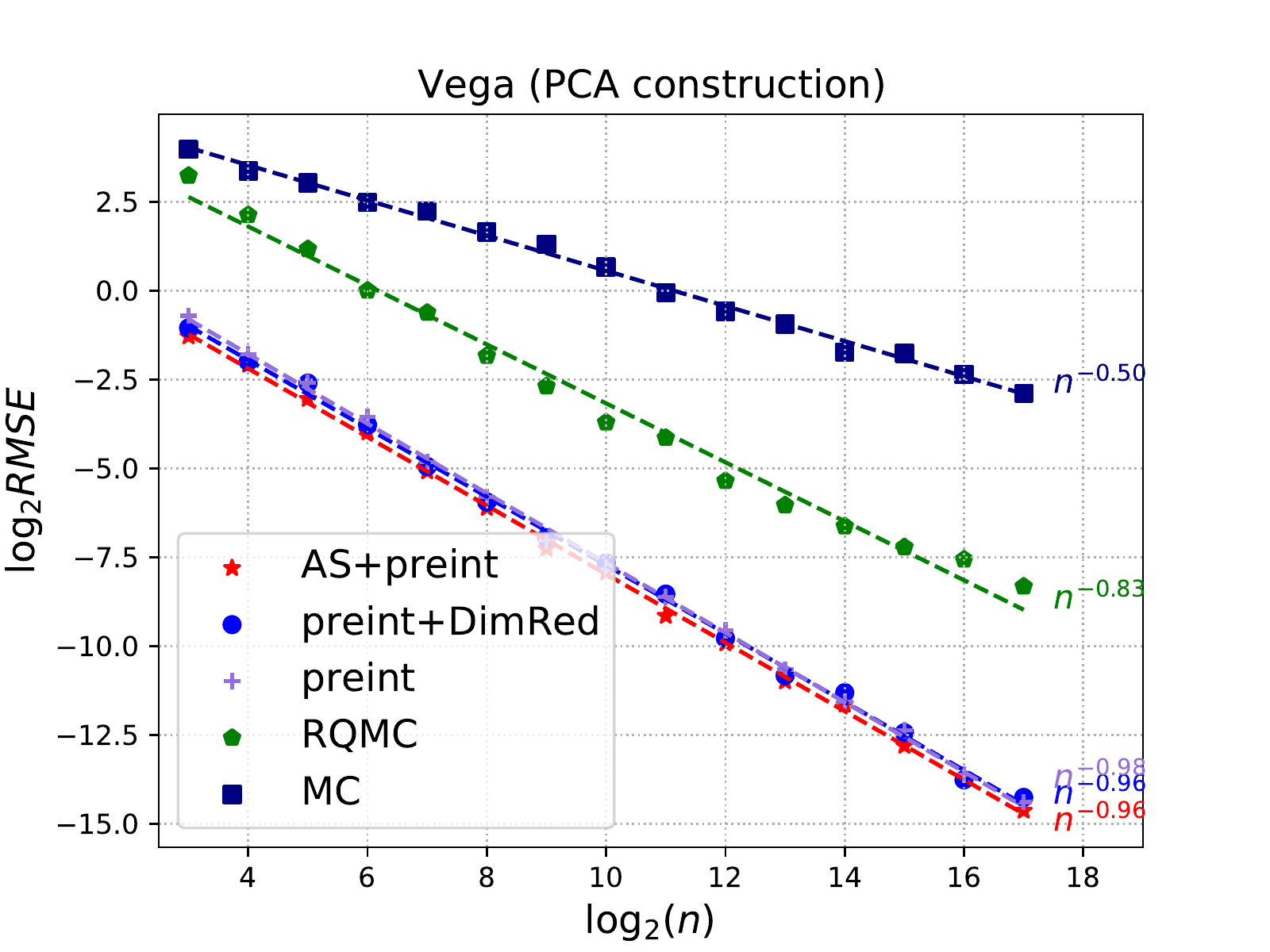}
\end{subfigure}
\caption{Single asset option. PCA construction of Brownian motion with $d=50$.}
\label{fig: pca}
\end{figure}

\subsection{Basket option}
A basket option depends on the weighted average of several assets.
Suppose that the $L$ assets $S^{(\ell)},\ldots,S^{(L)}$ follow the SDE
\begin{align*}
\mrd S_t^{(\ell)}=r_\ell S_t^{(\ell)}\mrd t+\sigma_\ell S_t^{(\ell)}\mrd B_t^{(\ell)},
\end{align*}
where $\{B^{(\ell)}\}_{1\leq \ell\leq L}$ are standard Brownian motions with correlation
\begin{align*}
\mathrm{Corr}(B_t^{(\ell)},B_t^{(k)})=\rho_{\ell k}
\end{align*}
for all $t>0$.
For some nonnegative weights $w_1+\ldots+w_L=1$, the payoff function of the Asian basket call option is given by
\begin{align*}
\Biggl(\,\sum_{\ell=1}^Lw_\ell \bar S^{(\ell)} -K\Biggr)_+
\end{align*}
where $\bar S^{(\ell)}$ is the arithmetic average of $S^{(\ell)}_t$ in the time interval $[0,T]$. Here, we only consider $L=2$ assets. To generate $B^{(1)},B^{(2)}$ with correlation $\rho$, we can generate two independent standard Brownian motions $W^{(1)},W^{(2)}$
letting
\begin{align*}
B^{(1)}=W^{(1)}\quad\text{and}\quad B^{(2)}=\rho W^{(1)}+\sqrt{1-\rho^2}W^{(2)}.
\end{align*}
Following the same discretization as before, we can generate
$(\bsz\tran,\tilde\bsz\tran)\sim\dnorm(0,I_{2d})$. Then
for time steps $j=1,\dots,d$, let
\begin{align*}
S_j ^{(1)}&=S_0^{(1)}\exp\Bigl(\Bigl(r_1-\frac{\sigma_1^{2}}{2}\Bigr)j\Delta t+\sigma_1\sum_{k=1}^d R_{jk}z_{k}\Bigr),\quad\text{and}\\
S_j^{(2)}&=S_0^{(2)}\exp\Bigl(\Bigl(r_2-\frac{\sigma_2^{2}}{2}\Bigr)j\Delta t+\sigma_2\Bigl(\rho\sum_{k=1}^d R_{jk}z_{k}+\sqrt{1-\rho^2}\sum_{k=1}^dR_{jk}\tilde z_{k}\Bigr)\Bigr).
\end{align*}
Again, the $d\times d$ matrix $R$ can be constructed by
the standard construction or the PCA construction.
Thus, the expected payoff can be written as
\begin{align*}
\e\Bigl(\Bigl(\frac{w_1}{d}\sum_{j=1}^d S_j^{(1)} + \frac{w_2}{d}\sum_{j=1}^d S_j^{(2)}-K\Bigr)_+ \Bigr),
\end{align*}
where the expectation is taken over
$(\bsz\tran,\tilde\bsz\tran)\tran\sim\dnorm(0,I_{2d})$.

In the pre-integration step, we choose to integrate out $z_{1}$.
This can be easily carried out as in equations~\eqref{equ: S=K}
and~\eqref{equ: pre-int} provided that the first column of $\tilde R$ is nonnegative.
We take $d=50$, $T=1$, $\rho=0.5$, $S_0^{(1)}=S_0^{(2)}=100$ and $K=95$.
The RMSE values are plotted in Figure~\ref{fig: basket}.
In the left panel, we take $r_1=0.1$, $r_2=0.2$,
$\sigma_1=0.2$, $\sigma_2=0.4$, $w_1=0.8$ and $w_2=0.2$. In the right panel, we take $r_1=0.2$, $r_2=0.1$, $\sigma_1=0.4$, $\sigma_2=0.2$, $w_1=0.2$ and $w_8=0.8$.
This reverses the roles of the two assets which will make
a difference when one pre-integrates over the first of the $2d$
inputs. We use $M=128$ as before.
In the top row of Figure~\ref{fig: basket}, the matrix $R$ is obtained by the standard construction, while in the bottom row, $R$ is obtained by the PCA construction.

\begin{figure}
\begin{subfigure}{.49\textwidth}
\includegraphics[width=\textwidth]{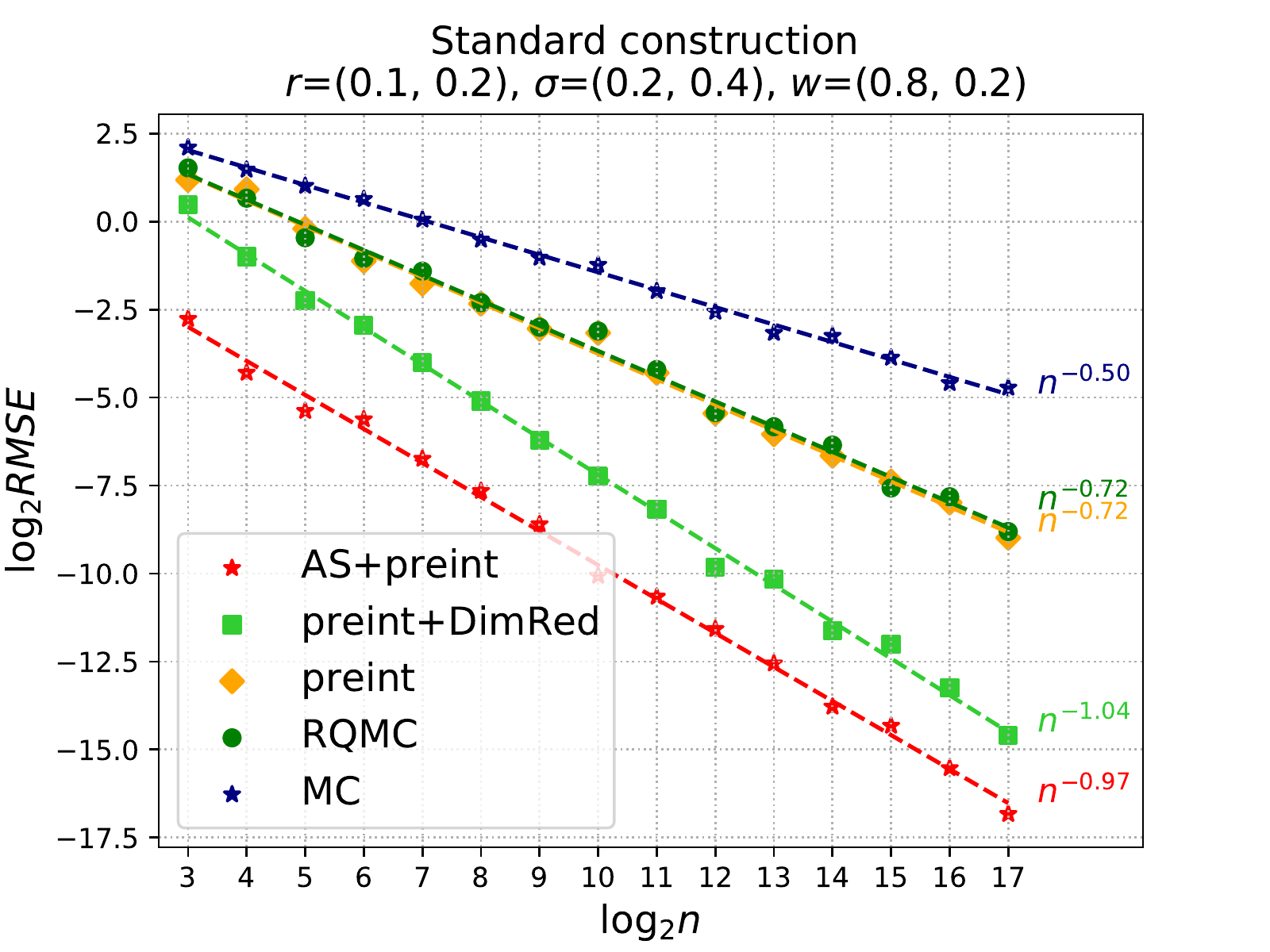}
\end{subfigure}
\begin{subfigure}{.49\textwidth}
\includegraphics[width=\textwidth]{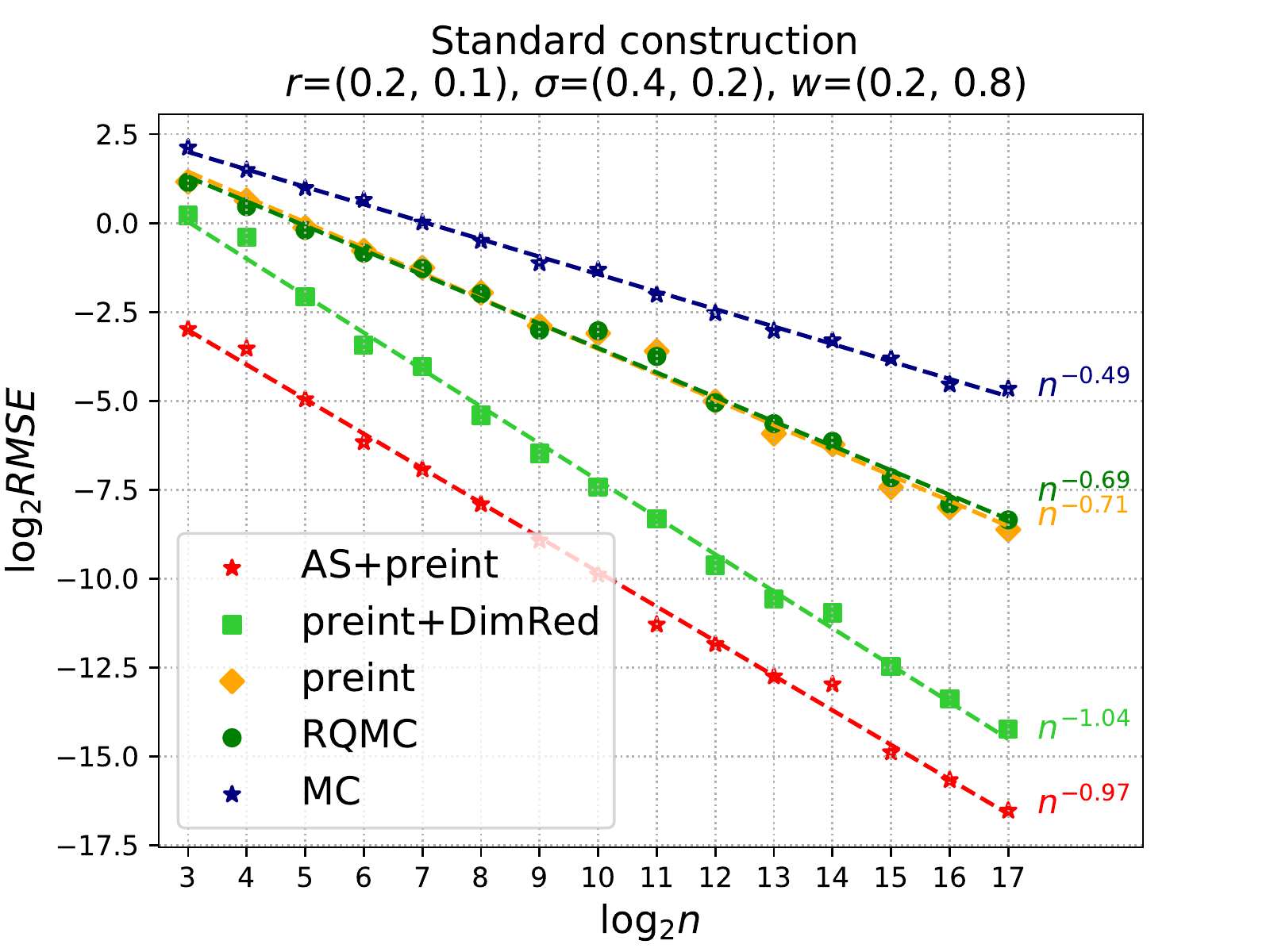}
\end{subfigure}
\begin{subfigure}{.49\textwidth}
\includegraphics[width=\textwidth]{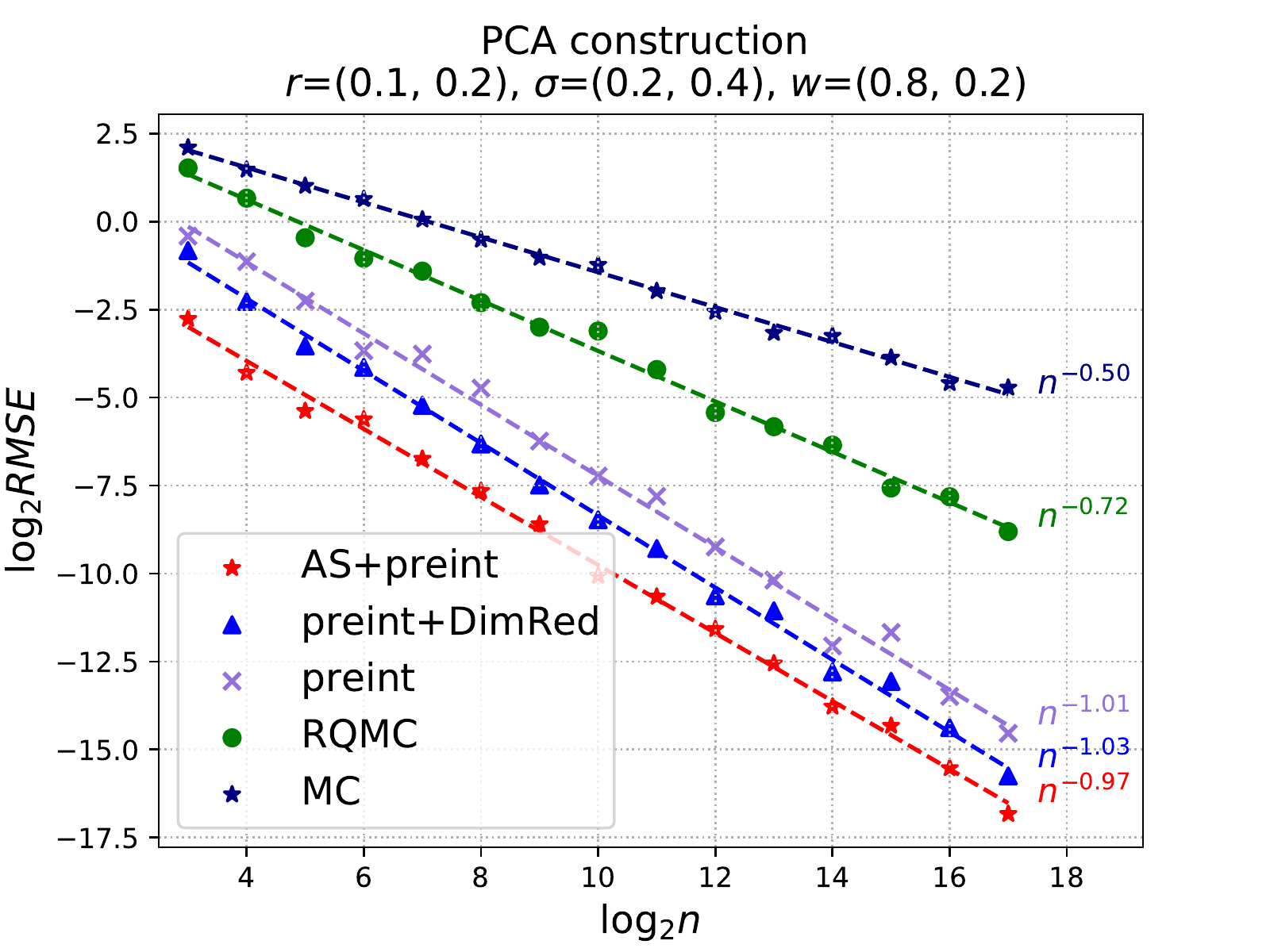}
\end{subfigure}
\begin{subfigure}{.49\textwidth}
\includegraphics[width=\textwidth]{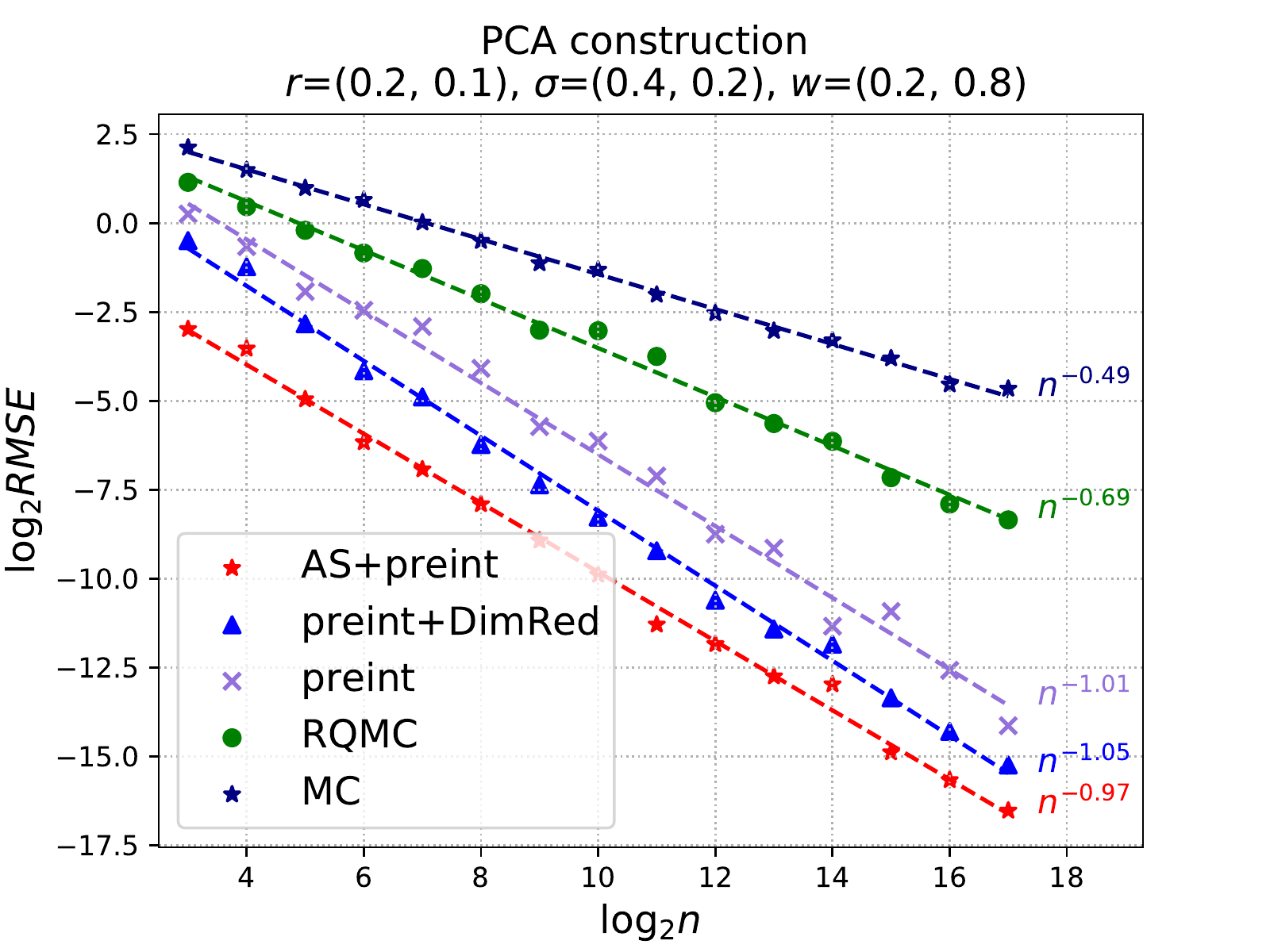}
\end{subfigure}
\caption{Basket option pricing.}
\label{fig: basket}
\end{figure}

A few observations are in order:
\begin{enumerate}[(a)]
\item For the standard construction, pre-integrating over $z_1$ brings little improvement over plain RQMC. But for PCA construction, pre-integrating over $z_1$ brings a big variance reduction.

\item The dimension reduction technique from \cite{xiao2018conditional} largely improves the RMSE from pre-integration without dimension reduction. This improvement is particularly significant for the standard construction.

\item Active subspace pre-integration, \texttt{AS+preint}, has the best performance for both the standard and PCA constructions. It is even better than pre-integrating out the first principal component of Brownian motion with dimension reduction. In this example, the active subspace method is able to find a better direction than the first principal component over which to pre-integrate.
\end{enumerate}

This problem required $L=2$ Brownian motions and
the above examples used the same decomposition
to sample them both.  A sharper principal components
analysis would merge the two Brownian motions into
a single $2d$-dimensional process and use the
principal components from their joint covariance
matrix.  We call this the full PCA construction
as described next.

The processes $B^{(1)}=\sigma_1R\bsz$,
and $B{(2)}=\sigma_2R(\rho \bsz+\sqrt{1-\rho^2}\tilde \bsz)$,
have joint distribution
\begin{align*}
\begin{pmatrix}
B^{(1)}\\ B^{(2)}
\end{pmatrix}\sim
\dnorm\left(0, \begin{pmatrix}
\sigma_1^2\Sigma & \rho\sigma_1\sigma_2\Sigma\\
\rho\sigma_1\sigma_2\Sigma & \sigma_2^2\Sigma
\end{pmatrix}\right).
\end{align*}
Let $\wt\Sigma$ be the joint covariance matrix above.
An alternative generation method is to pick $\tilde R$
with $\tilde R\tilde R\tran=\tilde\Sigma$, and let
\begin{align*}
\begin{pmatrix}
B^{(1)}\\ B^{(2)}
\end{pmatrix}
=\tilde R
\begin{pmatrix}
\bsz\\
\tilde\bsz
\end{pmatrix}
,\quad\text{ where }\quad
\begin{pmatrix}
\bsz\\
\tilde\bsz
\end{pmatrix}
\sim\dnorm(0,I_{2d}).
\end{align*}
The matrix $\tilde R$ can be found by either a
Cholesky decomposition or eigendecomposition of $\tilde R$.
We call this method full standard construction or full PCA construction.
Taking $B^{(1)}=\sigma_1 R\bsz$,
and $B^{(2)}=\sigma_2R(\rho \bsz+\sqrt{1-\rho^2}\tilde\bsz)$, it is equivalent to taking
\[
\tilde R=\begin{pmatrix}
\sigma_1 I_d & \mathbf{0} \\ \sigma_2\rho I_d & \sqrt{1-\rho^2}\sigma_2I_d
\end{pmatrix}
\begin{pmatrix}
R & \mathbf{0} \\ \mathbf{0} & R
\end{pmatrix}.
\]
We call this the ordinary PCA or standard construction depending on whether $R$ is from the PCA or standard construction.

Figure~\ref{fig: basket compare}
compares results using this full PCA construction.
All methods apply pre-integration before using RQMC.
For active subspace pre-integration,
we pre-integrate along the direction in $\real^{2d}$
found by the active subspace method.
For ``full PCA", we pre-integrate along the first principal component
of $\tilde \Sigma$. 
We can see that active subspace pre-integration has a better RMSE
than even the full PCA construction with dimension reduction, which we consider to be the
natural generalization of the PCA construction to this setting.

\begin{figure}
\begin{subfigure}{.49\textwidth}
\includegraphics[width=\textwidth]{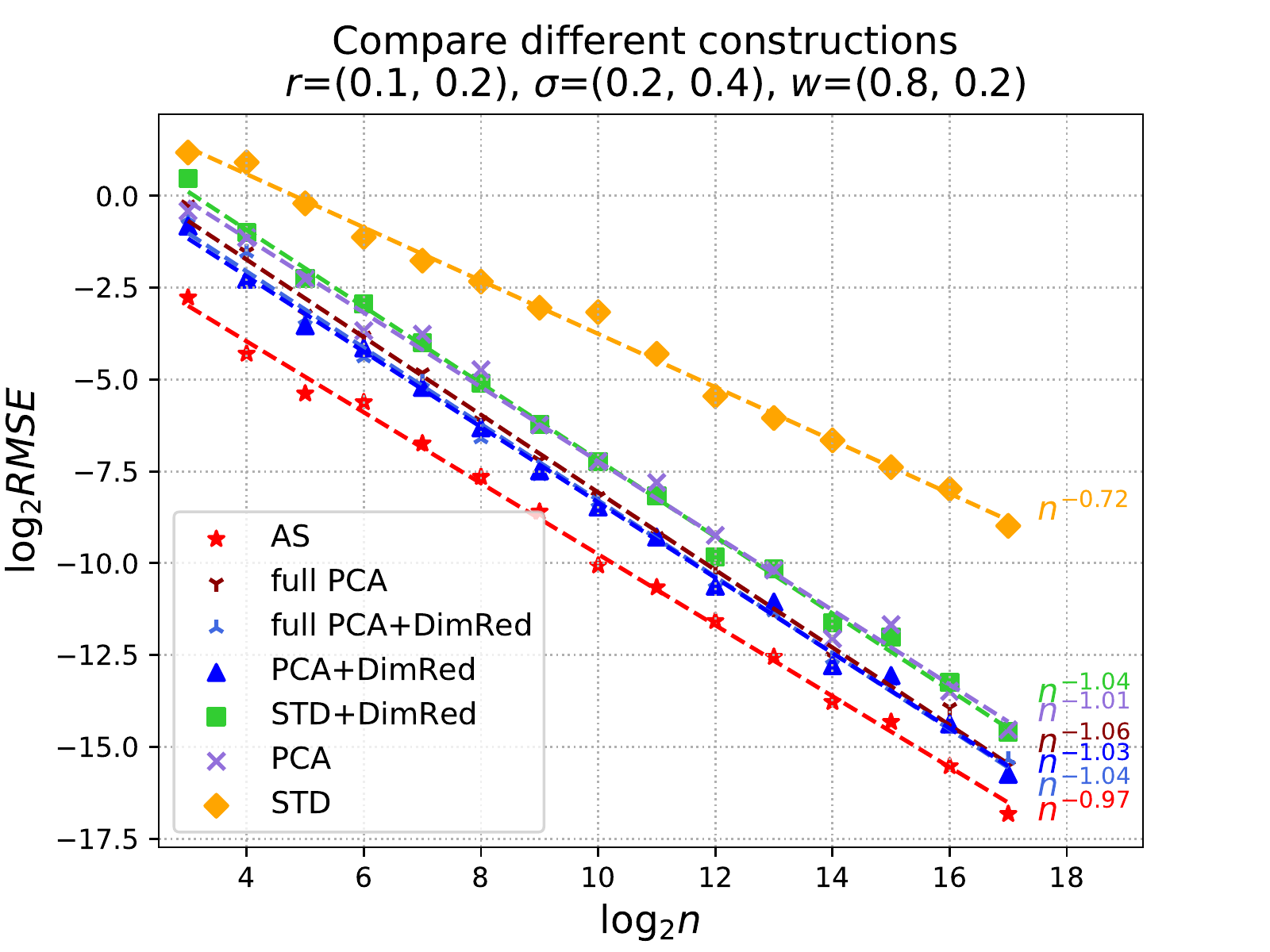}
\end{subfigure}
\begin{subfigure}{.49\textwidth}
\includegraphics[width=\textwidth]{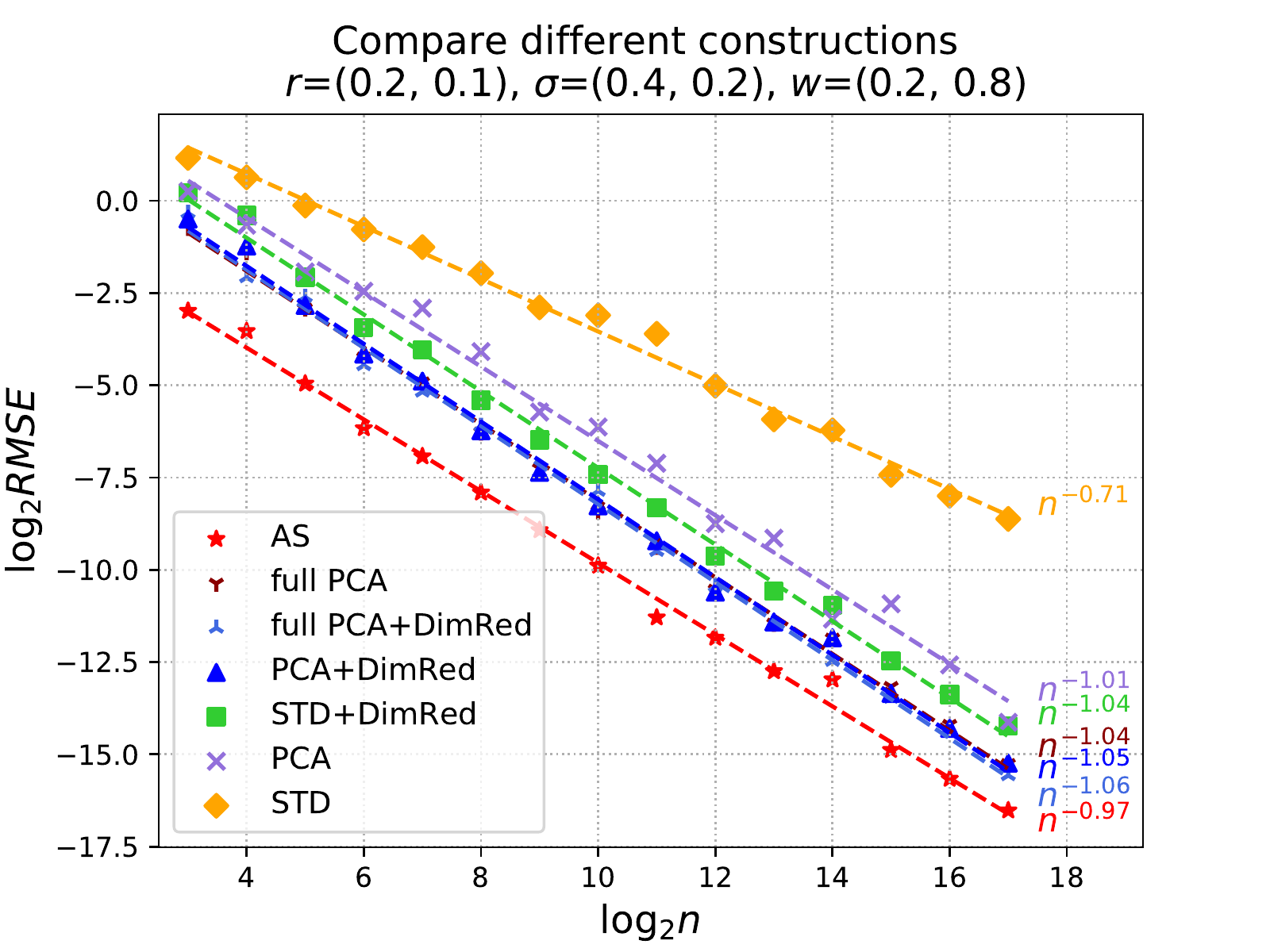}
\end{subfigure}
\caption{
These plots compare pre-integration strategies for
two basket options. We compare active subspace pre-integration
to strategies with full and partial PCA and standard contructions
of Brownian motion.
}
\label{fig: basket compare}
\end{figure}

\subsection{Timing}

Pre-integration changes the computational cost of
RQMC and this has not been much studied in
the literature.  Our figures compare the RMSE
of different samplers as a function of the number $n$ of evaluations.
Efficiency depends also on running times.
Here we make study of running times for
the pre-integration methods we studied.
It is brief in order to conserve space.
Running times depend on implementation details
that could make our efficiency calculations differ from others'.
All of timings we report were conducted on a MacBook Pro with 8 cores and 16GB memory. 
We simulated them all with $10$ replicates
and $n=2^{15}$.  The standard errors of the average evaluation times were negligible, about $0.3$\%--$0.8$\% of the corresponding means. We also computed times to find $\widehat C$ and $\wh{\widetilde C}$ (defined in equations \eqref{equ: Sigma hat} and \eqref{equ: hat tilde C}).
Those had small standard errors too, and their
costs are a small fraction of the total.
The costs of finding the eigendecompositons are
negligible in our examples.

Table~\ref{tab:alltimes} shows the results of
our timings.  For the 6 integrands, we find
that pre-integration raises the cost of
computation by roughly $12$ to $16$-fold.
That extra cost is not enough to offset
the gains from pre-integration with large $n$
except for pre-integration of the first component
in the standard construction.  That variable is not
very important and we might well have expected pre-integration
to be unhelpful for it.

Most of the pre-integrated computations took
about the same amount of time but
plain pre-integration and pre-integration
with dimension reduction take slighly
more time for the standard construction.
Upon investigation we found that those methods
took slightly more Newton iterations on average.
In hindsight this makes sense.  The Newton
iterations started at $0$.  In the standard construction,
the first variable does not have a very strong effect
on the option value, requiring it to take larger
values to reach the positivity threshold $x_1=\gamma(\bsx_{2{:}d})$ and hence
(a few) more iterations.

Another component of cost for active subspace methods
is in computing an approximation to $C$ and $\widetilde C$.  We used $M=128$
function evaluations.  Those are about $2d$ times as expensive
as evaluations because they use divided differences.
One advantage of active subspace pre-integration is that
it uses divided differences of the original integrand.
Pre-integration plus dimension reduction requires
divided differences of the pre-integrated function
with associated Newton searches, and that costs more.


\begin{table}
\centering
\begin{tabular}{lccccccc}
\toprule
 & MC & RQMC & pre-int & AS+pre-int & pre-int+DimRed & $\wh C$ & $\wh{\widetilde C}$ \\
\midrule
\multirow{2}{*}{Payoff}
& 0.6 &        0.6 &          \phz7.5 &             \phz6.9 &                 \phz7.4 &       0.3 &            2.9 \\
& 0.6 &        0.6 &          \phz 6.9 &             \phz 6.9 &               \phz  6.9 &        0.3 &            2.7\\
\midrule
\multirow{2}{*}{Delta}
&0.5 &        0.4 &          \phz 5.7 &             \phz 5.2 &                 \phz 5.7 &        0.2 &            2.2  \\
& 0.5 &        0.4 &          \phz 5.1 &             \phz 5.2 &                 \phz 5.2 &        0.2 &            2.0 \\
\midrule
\multirow{2}{*}{Gamma}
&      0.6 &        0.6 &         12.2 &            12.2 &                12.3 &        0.2 &            4.7 \\
& 0.6 &        0.6 &         11.7 &            12.3 &                11.7 &        0.2 &            4.5 \\
\midrule
\multirow{2}{*}{Rho}
&  0.9 &        0.9 &         10.6 &            10.1 &                10.6 &        0.3 &            4.1 \\
& 0.9 &        0.9 &         10.1 &            10.2 &                10.1 &        0.4 &            3.9\\
\midrule
\multirow{2}{*}{Theta}
&  1.0 &        1.0 &         13.9 &            13.3 &                14.0 &        0.4 &            5.4  \\
&  1.0 &        1.0 &         13.4 &            13.4 &                13.4 &        0.4 &            5.2 \\
\midrule
\multirow{2}{*}{Vega}
& 0.6 &        0.6 &          \phz 9.7 &             \phz 9.1 &                 \phz 9.7 &        0.3 &            3.7\\
& 0.6 &        0.6 &          \phz 9.1 &             \phz 9.1 &                 \phz 9.1 &        0.3 &            3.5  \\
\bottomrule
\end{tabular}
\caption{\label{tab:alltimes}
Time in seconds to compute all Asian option integrands and methods. All simulations take $2^{15}$ samples and the times are averaged over 10 replicates.
Each integrand has two rows, where the top row uses the standard construction and the bottom row uses
the PCA construction.
The right two columns are the times used to estimate $\wh C$ and $\wh{\widetilde C}$
 by $M=128$ samples.}
\end{table}

\section{Discussion}\label{sec:discussion}

In this paper we have studied a kind of conditional RQMC known as pre-integration.
We found that, just like conditional MC, the procedure can reduce variance but cannot
increase it.  We proposed to pre-integrate over the first component in an active subspace approximation,
which is also known as the gradient PCA approximation.  We showed a close relationship between
this choice of pre-integration variable and what one would get using a computationally infeasible
but well motivated choice by maximizing the Sobol' index of a linear combination of variables.

In the numerical examples of option pricing, we see that active subspace pre-integration achieves a better RMSE than previous methods when using the standard construction of Brownian motion. For the PCA construction, the proposed method has comparable accuracy in four of six cases, is better once and is worse once.
For those six integrands, the PCA construction is already very good.
Active subspace pre-integration still provides an automatic way to choose the
pre-integration direction. Even using the standard construction it is almost
as good as pre-integration with the PCA construction.
It can be used in settings where there is no strong incumbent decomposition
analogous to the PCA for the Asian option.
We saw it perform well for basket options.
We even saw an improvement for Gamma
in the very well studied case of the Asian option with the PCA construction.

We note that active subspaces use an uncentered PCA analysis of the
matrix of sample gradients.  One could use instead a centered analysis
of $\e( (\nabla f(\bsx)-\eta) (\nabla f(\bsx)-\eta)\tran)$ where
$\eta = \e(\nabla f(\bsx))$.
The potential advantage of this is that $\nabla f(\bsx)-\eta$
is the gradient of $f(\bsx)-\eta\tran\bsx$ which subtracts
a linear approximation from $f$ before searching for $\theta$.
The rationale for this alternative is that RQMC might already
do well integrating a linear function and we would then want
to choose a direction $\theta$ that performs well for the nonlinear part of $f$.
In our examples, we found very little difference between the two methods
and so we proposed the simpler uncentered active subspace pre-integration.

\section*{Acknowledgments}
This work was supported by the U.S.
National Science Foundation under grant IIS-1837931.

\appendix
\bibliographystyle{plain}
\bibliography{preintegration.bbl}

\end{document}